\documentclass{article}
\usepackage{times}
\usepackage[hyperindex=true,pageanchor=true,hyperfigures=true,backref=false]{hyperref} 
\usepackage{amsmath}
\usepackage{amssymb}
\usepackage{amsthm}
\usepackage[capitalise,nameinlink]{cleveref}
\usepackage{mathtools}
\usepackage{tikz-cd}
\usepackage{comment}

\usepackage{enumitem}

\usepackage{url}
\usepackage{dsfont} 
\usepackage[toc]{appendix}
\DeclareSymbolFontAlphabet{\Bbb}{AMSb}
\usepackage[T1]{fontenc}

\usepackage{mleftright,xparse}

\usepackage{thm-restate,amsthm}

\renewcommand{\i}{\mathrm{i}}
\renewcommand{\d}{\, \textup{d}}
\newcommand{\T}{\mathbb{T}}

\newlength{\myleftmargin}
\usepackage{amsmath}
\usepackage{amssymb}
\usepackage{amsthm}
\usepackage{color}
\DeclareSymbolFontAlphabet{\Bbb}{AMSb}

\newtheorem{theorem}{Theorem}[section]
\newtheorem{lemma}[theorem]{Lemma}

\newtheorem{corollary}[theorem]{Corollary}
\newtheorem{definition}[theorem]{Definition}

\newlength{\fixboxwidth}
\newcommand{\mycdot}{\,\cdot\,}

\newcommand{\N}{\mathbb{N}}
\newcommand{\C}{\mathbb{C}}

\newcommand{\R}{\mathbb{R}}
\newcommand{\Z}{\mathbb{Z}}

\newcommand{\E}{\mathbb{E}}

\newcommand{\e}{\mathrm{e}}

\DeclareMathOperator*{\esssup}{ess\,sup}
\DeclareMathOperator*{\essinf}{ess\,inf}

\title{Minimal-Norm Extensions of Stationary Kernels on Subgroups of Locally Compact Abelian Groups and Gaussian Conditioning}

\author{Daniel Winkle\\
School of Engineering Sciences, LUT University (Lappeenranta, Finland)\\
\texttt{\small Daniel.Winkle@lut.fi}\\
}

\begin{document}

\maketitle

\begin{abstract}
We study restrictions of stationary kernels on locally compact abelian groups $G$ to closed subgroups $H$. For a nonnegative spectral density $\hat k$, we derive an explicit fibrewise Fourier representation of the minimal-norm extension operator from the reproducing kernel Hilbert space of the restricted kernel on $H$ to the original space on $G$. We characterize when the canonical Fourier formula extends boundedly from $L^2(H)$ to $L^2(G)$, identify its exact operator norm and lower norm, and obtain bounds on the associated interpolation spaces. When $G$ is compact, the extension is a contraction and, for stationary Gaussian random variables admitting a measurable continuous version, maps the observed restriction to the conditional expectation. We also give a counterexample to a previously asserted supremum-norm contraction and illustrate the theory through cardinal interpolation and conditioning on one-dimensional subgroups of the torus.
\end{abstract}

\textbf{Mathematical Subject Classification (2020).} Primary 60G15; Secondary 46E22

\textbf{Key Words.} Gaussian random variables, Kernel methods, Conditioning

\section{Introduction}\label{sec:intro} 

Conditioning Gaussian random variables on observations is a fundamental operation in probability theory, statistics, and related approximation problems \cite{bogachev1998gaussian,steinwart2024conditioning}. In the setting of Gaussian processes, this operation appears, for instance, in kriging and Gaussian process regression, and it is closely related to kernel interpolation \cite{wenzel2026sharp,rasmussen2006gaussian,wendland2004scattered}. If only finitely many observations are used, the conditional mean and covariance can be described by finite-dimensional linear algebra involving covariance matrices \cite{rasmussen2006gaussian}. However, in many situations one is naturally led to observations on infinitely many points, for example when a process is observed on a subdomain, a submanifold, or a subgroup \cite{lagatta2013continuous,steinwart2024conditioning,fuselier2012scattered}. In this case, the conditional expectation is naturally studied as an operator between function spaces rather than as a finite-dimensional matrix \cite{lagatta2013continuous,owhadi2018conditioning,steinwart2024conditioning,travelletti2024disintegration}.

The present paper studies this operator on the level of reproducing kernel Hilbert spaces (RKHS) associated with stationary kernels on locally compact abelian groups. Let $G$ be a locally compact abelian group and let $H\subseteq G$ be a closed subgroup \cite{Folland}. Starting from a stationary kernel $k$ on $G$, or equivalently from an RKHS described by a spectral density, we consider its restriction to $H$ and the corresponding RKHSs $\mathcal N(G)$ and $\mathcal N(H)$ \cite{feichtinger2007minimal,wendland2004scattered}. The natural object in this setting is the minimal-norm extension operator
\begin{align*}
    M_W:\mathcal N(H)\to \mathcal N(G),
\end{align*}
which maps a function on $H$ to its minimal-norm extension in $\mathcal N(G)$ \cite[Chapter 10.7]{wendland2004scattered}. Using the Fourier representation of stationary kernels and the structure of closed subgroups of locally compact abelian groups, we derive an explicit formula for this operator in terms of the spectral density of $k$ \cite{feichtinger2007minimal}. In the compact case, this RKHS construction has a direct probabilistic interpretation: if $X$ is a stationary Gaussian random variable on $G$ and $Y=X|_H$, then the corresponding extension operator maps the observation $Y$ to the conditional expectation $\mathbb E(X\mid Y)$ \cite{WinkleAnalysis,steinwart2024conditioning}.

The present work is also motivated by a boundedness issue in infinite-dimensional Gaussian conditioning. In \cite{lagatta2013continuous}, continuous disintegrations of Gaussian processes are studied, and a contraction property is asserted for the corresponding conditional expectation operator between spaces of continuous functions. We show that this contraction statement does not hold in the stated generality. More precisely, using the Gaussian kernel and the fact that the corresponding sample paths are analytic, we construct a counterexample for which the conditional expectation operator is given by analytic continuation and cannot satisfy the claimed norm bound. This illustrates that, for infinite-dimensional observations, boundedness of the conditioning operator is a delicate question and depends strongly on the chosen function spaces. 

The other main contributions of this paper are as follows. First, we derive an explicit Fourier representation of the minimal-norm extension operator for restrictions of stationary kernels to closed subgroups of locally compact abelian groups. Second, we give conditions on the spectral density under which this operator extends boundedly from $L^2(H)$ to $L^2(G)$, and we identify the corresponding operator norm. Third, in the compact case, we interpret this extension operator as the conditional expectation operator for stationary Gaussian random variables observed on H. Finally, we illustrate the theory in examples including cardinal interpolation and conditioning on one-dimensional subgroups of the torus. To the best of our knowledge, the $L^2$-boundedness of the subgroup extension operator in terms of the spectral density has not been treated in this generality.

A related but different line of work concerns the effect of group symmetries and invariances in kernel methods. For instance, \cite{Behrooz} studies kernel ridge regression on compact manifolds under invariance with respect to compact Lie group actions and quantifies the corresponding gain in sample complexity. While their focus is statistical learning with invariant target functions, the present paper studies conditioning and minimal-norm extension operators arising from restrictions to closed subgroups. Both viewpoints illustrate how group structure can be used to reduce or reorganize kernel-based problems.

The paper is organized as follows. Section \ref{sec:2} recalls the necessary background on locally compact abelian groups, stationary kernels, RKHSs, and Gaussian random variables. Section \ref{sec:results} contains the main results. We first state the Fourier representation of the minimal-norm extension operator for stationary kernels on locally compact abelian groups and then specialize the result to compact groups, where it admits a direct interpretation in terms of conditional expectations of stationary Gaussian random variables. Section \ref{sec:4} discusses examples. In particular, we provide a counterexample to a contraction statement from \cite{lagatta2013continuous}, and we treat cardinal interpolation on $h\mathbb Z\subset\mathbb R$ as well as conditioning on one-dimensional subgroups of the torus. The proofs of the main results are given in Appendix \ref{sec:proofs}, while auxiliary statements are collected in Appendix \ref{sec:appendix}.

\color{black}

\section{Preliminaries}\label{sec:prelim}\label{sec:2} 
This section collects the background material needed for the main results. We begin with locally compact abelian groups, then discuss stationary kernels and their associated reproducing kernel Hilbert spaces, and finally recall the relevant notions concerning Gaussian random variables.

\subsection{Locally Compact Abelian Groups}
We briefly recall the notions from harmonic analysis that are needed in the sequel. For a more detailed account, we refer to \cite{Folland}.

Let $G$ be a locally compact abelian group with neutral element denoted by $0$. We write $\hat{G}$ for the dual of $G$, that is, the group of continuous characters $\xi: G\to \mathbb T$ with $\T:=\{ z \in \C \, \vert \, \vert z \vert=1\}$. Moreover, $C_c(G)$ denotes the space of continuous functions on $G$ with compact support.

Given a closed subgroup $H \subseteq G$, we denote by $H^\perp$ its annihilator, defined by 
\begin{align*}
    H^\perp:=\{ \xi \in \hat{G} \, \vert \, \xi(h)=1, \, \, \forall h \in H\}.
\end{align*}
We fix Haar measures on $G$ and $\hat{G}$ such that the Fourier inversion formula holds, see Theorem \ref{theorem:Fourierinversion}. To keep the notation light, we do not introduce separate symbols for these measures. The measure used in an integral will always be clear from the domain of integration.

For $f \in L^1(G)$, we define its Fourier transform $\hat{f}:\hat{G} \to \C$ by
\begin{align*}
    \hat{f}(\xi)= \int_G \overline{\xi(x)} f(x) \d x,
\end{align*}
see \cite[Chapter 4.2]{Folland}. In the case of $G=\mathbb{R}$, we identify $\hat{G}=\mathbb{R}$, and the characters $\xi: \R \to \T$ are given by $\xi(x)=\mathrm{e}^{2\pi \i \xi x}$. Thus, we recover the usual Fourier transform.
Moreover, we have the following inversion formula.
\begin{theorem}\label{theorem:Fourierinversion}
    Let $f \in L^1(G)$ and assume that $\hat{f}\in L^1(\hat{G})$. Then 
    \begin{align*}
        f(x)= \int_{\hat{G}} \xi(x) \hat{f}(\xi) \d \xi
    \end{align*}
for almost all $x \in G$. Moreover, if $f$ is continuous, then the identity holds for every $x \in G$.
\end{theorem}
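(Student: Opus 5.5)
We follow the standard route via approximate identities, as in Folland. The goal is to show that the function $g(x):=\int_{\hat G}\xi(x)\hat f(\xi)\d\xi$ coincides with $f$ almost everywhere.

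First, since $\hat f\in L^1(\hat G)$, the function $g$ is well defined, bounded and continuous by dominated convergence. Continuity uses that $x\mapsto \xi(x)$ is continuous for each $\xi$ and that $|\xi(x)\hat f(\xi)|\le|\hat f(\xi)|$. Next, choose an approximate identity built from positive-definite functions. For a compact symmetric neighbourhood $V$ of $\hat 0$ in $\hat G$, put $\varphi_V:=|V|^{-1}\,\mathbb 1_V*\mathbb 1_V$, and let $\psi_V$ be its inverse transform on $G$, $\psi_V(x)=\int_{\hat G}\xi(x)\varphi_V(\xi)\d\xi$. Then $\psi_V=|V|^{-1}|\check{\mathbb 1}_V|^2\ge0$.

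For $h\in L^1(G)$ with $h=\check\varphi$, $\varphi\in L^1(\hat G)$, Fubini gives the multiplication formula
\begin{align*}
\int_G f(x)\,\overline{h(x)}\d x=\int_{\hat G}\hat f(\xi)\,\overline{\varphi(\xi)}\d\xi .
\end{align*}
Apply this with $h$ equal to translates of $\psi_V$, i.e.\ $\varphi=\overline{\xi(y)}\varphi_V$. This yields
\begin{align*}
(f*\psi_V)(y)=\int_{\hat G}\xi(y)\hat f(\xi)\varphi_V(\xi)\d\xi .
\end{align*}

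Now let $V$ shrink to $\{\hat 0\}$. On the right, $\varphi_V\to1$ pointwise (indeed locally uniformly) with $0\le\varphi_V\le1$. Dominated convergence then sends the right-hand side to $g(y)$ for every $y$.

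On the left, we need $f*\psi_V\to f$ in $L^1(G)$. This requires that $\psi_V\ge0$ and $\int_G\psi_V=\varphi_V(\hat 0)=1$, and that the mass of $\psi_V$ concentrates near $0$. The concentration step is the main obstacle. It relies on Plancherel for $L^1\cap L^2$ (to identify $\int\psi_V$ and $\|\check{\mathbb 1}_V\|_2^2=|V|$). It also needs a choice of $V$ adapted to a given neighbourhood $U$ of $0$ in $G$, so that $\int_{G\setminus U}\psi_V$ is small. This is done by choosing $V$ with $\operatorname{Re}\xi(x)$ close to $1$ for $\xi\in V$ and $x$ in a compact set, using that the topology on $\hat G$ is that of compact convergence, together with Pontryagin duality. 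With this in hand, a subsequence of $f*\psi_V$ converges to $f$ a.e. Since it also converges pointwise to $g$, we get $f=g$ almost everywhere.

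Finally, if $f$ is continuous, then $f$ and $g$ are continuous functions agreeing almost everywhere. Haar measure gives positive mass to every nonempty open set, so $\{f\neq g\}$, being open and null, is empty. Hence the identity holds for every $x\in G$.
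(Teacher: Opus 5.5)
The multiplication formula, the positivity $\psi_V\geq0$ and your final continuity step are fine, but the limiting argument runs in the wrong direction. Since $\varphi_V(\xi)=|V|^{-1}|V\cap(\xi+V)|$ vanishes outside $V+V$, letting $V$ shrink to $\{\hat 0\}$ gives $\varphi_V\to\chi_{\{\hat 0\}}$ pointwise, not $\varphi_V\to 1$.

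Correspondingly, $\psi_V$ spreads out instead of concentrating. For $G=\R$ and $V=[-\varepsilon,\varepsilon]$ one gets $\psi_V(x)=\sin^2(2\pi\varepsilon x)/(2\varepsilon\pi^2x^2)$, which has height $2\varepsilon$ and width of order $1/\varepsilon$. The choice of $V$ you propose for the concentration step shows this directly. If $\operatorname{Re}\xi(x)\geq1-\delta$ for all $\xi\in V$ and $x\in K$, then $\operatorname{Re}\check\chi_V(x)\geq(1-\delta)|V|$. Hence $\psi_V\geq(1-\delta)^2|V|$ on all of $K$, so $\psi_V$ is nearly flat on the large set $K$.

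In this regime both sides of your identity tend to $0$ when $\hat G$ is non-discrete, since $|f*\psi_V|\leq|V|\,\Vert f\Vert_{L^1(G)}$, and nothing is learned about $f$. Shrinking neighbourhoods in $\hat G$ cannot produce an approximate identity on $G$. You would need $V$ to grow along a F\o lner net, and then the concentration of $\psi_V$ is a L\'evy-continuity-type statement that your sketch does not supply.

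The repair is to put the same construction on $G$. Let $V$ be a small symmetric compact neighbourhood of $0$ in $G$ and set $\psi_V:=|V|^{-1}\chi_V*\chi_V$. Then $\psi_V\geq0$, $\int_G\psi_V=1$ and $\psi_V$ vanishes outside $V+V$. So $\Vert f*\psi_V-f\Vert_{L^1(G)}\leq\sup_{z\in V+V}\Vert f(\cdot-z)-f\Vert_{L^1(G)}\to0$, with no concentration argument needed.

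You cannot simply write translates of $\psi_V$ as inverse transforms, since that is inversion for $\psi_V$. However, $f*\chi_V\in L^1(G)\cap L^\infty(G)\subseteq L^2(G)$, so polarized Plancherel applied to $f*\chi_V$ and $\chi_{y+V}$ gives
\begin{align*}
(f*\psi_V)(y)=\int_{\hat G}\hat f(\xi)\,\hat\psi_V(\xi)\,\xi(y)\d\xi,\qquad \hat\psi_V=|V|^{-1}|\hat\chi_V|^2\in[0,1].
\end{align*}
Moreover $|\hat\psi_V(\xi)-1|\leq\sup_{x\in V+V}|\xi(x)-1|$. This tends to $0$ uniformly for $\xi$ in compact subsets of $\hat G$, by joint continuity of $(x,\xi)\mapsto\xi(x)$. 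Together with $\hat f\in L^1(\hat G)$, this gives $f*\psi_V\to g$ uniformly on $G$. Along a suitable sequence $V_n$ you then get $f=g$ almost everywhere, and your continuity argument finishes the proof.

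For comparison, the paper gives no proof and cites Folland. There the statement follows from the inversion theorem for integrable continuous functions of positive type, obtained via Bochner's theorem, together with a density argument. The repaired route replaces this with Plancherel. In Folland's development Plancherel is itself derived from the positive-type inversion theorem, so the repaired route is shorter once Plancherel is available, but not more elementary.
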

For a proof we refer to \cite[Theorem 4.32]{Folland}.  
Recall that the Fourier transform can also be extended onto $L^2(G)$.
\begin{theorem}[Plancherel]\label{theorem:Plancherel}
    The Fourier transform extends uniquely to an isometric isomorphism from $L^2(G)$ onto $L^2(\hat{G})$.
\end{theorem}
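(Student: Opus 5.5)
The plan is the standard density-and-completion argument. I would first prove that the Fourier transform is isometric on the dense subspace $L^1(G)\cap L^2(G)$, and then extend it by continuity. Surjectivity onto $L^2(\hat G)$ is the separate and harder step.

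\textbf{Step 1 (isometry on a dense subspace).} Let $f \in L^1(G)\cap L^2(G)$ and set $g := f * \tilde f$, where $\tilde f(x):=\overline{f(-x)}$. Then $g\in L^1(G)$, and $g$ is continuous, being a convolution of two $L^2$ functions. Moreover $g$ is of positive type, and $\hat g = \vert \hat f\vert^2 \geq 0$. The key claim is that $\hat g \in L^1(\hat G)$. For this I would use the fact that a continuous $L^1$ function of positive type has an integrable, nonnegative Fourier transform. One route is to test against an approximate identity $\psi_U$ built from characters via the Bochner representation, which gives $\int_{\hat G}\hat g \leq g(0)$. Once $\hat g\in L^1(\hat G)$ is known, Theorem \ref{theorem:Fourierinversion} applies at the point $x=0$, using continuity of $g$, and yields
\begin{align*}
\|f\|_{L^2(G)}^2 = g(0) = \int_{\hat G} \hat g(\xi)\d\xi = \|\hat f\|_{L^2(\hat G)}^2 .
\end{align*}

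\textbf{Step 2 (extension).} The space $L^1(G)\cap L^2(G)$ contains $C_c(G)$, so it is dense in $L^2(G)$. The map $f\mapsto \hat f$ is therefore a linear isometry on a dense subspace. It extends uniquely to an isometry $\mathcal F : L^2(G)\to L^2(\hat G)$. Uniqueness holds because any bounded extension is determined by its values on a dense set.

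\textbf{Step 3 (surjectivity).} The range of an isometry defined on a complete space is closed. So it suffices to show the range is dense, that is, that its orthogonal complement in $L^2(\hat G)$ is trivial. Suppose $\varphi\in L^2(\hat G)$ satisfies $\int_{\hat G}\varphi\,\overline{\hat f}=0$ for all $f\in L^1\cap L^2$. Replacing $f$ by its translates $f(\cdot - x)$ multiplies $\hat f$ by $\overline{\xi(x)}$. This gives $\int_{\hat G} \varphi(\xi)\overline{\hat f(\xi)}\xi(x)\d\xi =0$ for all $x\in G$. The function $\mu:=\varphi\overline{\hat f}$ lies in $L^1(\hat G)$, and the last identity says its inverse Fourier transform vanishes identically. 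By Fourier uniqueness for measures on $\hat G$ (Pontryagin duality $\hat{\hat G}\cong G$), we get $\varphi\overline{\hat f}=0$ almost everywhere. Finally, for each compact $K\subset\hat G$ there is some $f$ with $\hat f\neq 0$ on $K$; for instance, take $\hat f$ to be a convolution of indicator functions of neighbourhoods, which is a Fourier transform. Hence $\varphi=0$.

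\textbf{Main obstacle.} I expect the hardest part to be the integrability $\hat g\in L^1(\hat G)$ in Step 1 and the uniqueness/duality input in Step 3. Both rest on Bochner's theorem and Pontryagin duality, which are not stated earlier in the paper. In practice I would cite \cite[Theorem 4.26]{Folland} for these, as the paper does for Theorem \ref{theorem:Fourierinversion}.
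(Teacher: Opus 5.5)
Your outline is the standard proof, essentially the one behind \cite[Theorem 4.25]{Folland}; the paper itself states the theorem without proof and only gives that citation. Steps 1 and 2 are fine once the key claim of Step 1 is cited (see the last paragraph). Step 3, however, is circular as written, in two places.

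\textbf{First circularity.} You get $\varphi\overline{\hat f}=0$ from ``Fourier uniqueness for measures on $\hat G$ (Pontryagin duality)''. In the standard treatments, duality is itself proved using the surjectivity half of Plancherel, so you cannot invoke it here. You need much less. Let $\nu$ be a finite complex Radon measure on $\hat G$ with $\int_{\hat G}\xi(x)\d\nu(\xi)=0$ for all $x\in G$. Fubini gives $\int_{\hat G}\hat h\d\nu=\int_G h(x)\int_{\hat G}\xi(-x)\d\nu(\xi)\d x=0$ for every $h\in L^1(G)$. The set $\{\hat h : h\in L^1(G)\}$ is dense in $C_0(\hat G)$ by Stone--Weierstrass: it is a subalgebra, closed under conjugation because $\overline{\hat h}=\widehat{\tilde h}$, separating points and vanishing nowhere. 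Hence $\nu=0$ by the Riesz representation theorem.

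\textbf{Second circularity.} You claim that $\chi_A*\chi_B$ is the Fourier transform of some $f\in L^1(G)\cap L^2(G)$. This is normally proved by writing $\chi_A=\hat u$ and $\chi_B=\hat v$ with $u,v\in L^2(G)$ and taking $f=uv$, which presupposes the surjectivity you are proving. Use an elementary construction instead. For $\xi_0\in K$, take $0\le\phi\in C_c(G)$ with $\phi\neq0$, and set $h:=\xi_0\phi$, so that $\hat h(\xi_0)=\int_G\phi>0$. By continuity and compactness you obtain $h_1,\dots,h_n\in C_c(G)$ such that $F:=\sum_j h_j*\tilde h_j\in C_c(G)$ satisfies $\hat F=\sum_j|\hat h_j|^2>0$ on $K$. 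Then $\varphi=0$ almost everywhere on $K$. Since $\{\varphi\neq0\}$ is $\sigma$-finite and Haar measure is inner regular on $\sigma$-finite sets, $\varphi=0$ almost everywhere.

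\textbf{Remark on Step 1.} The Bochner/approximate-identity sketch does not by itself give $\int_{\hat G}\hat g\le g(0)$. Every natural test function you would pair with $\hat g$ needs inversion for some auxiliary function of positive type, which is the very thing at stake. The real input is that the Bochner measure of $g=f*\tilde f$ equals $\hat g\d\xi$, with the Haar measure on $\hat G$ normalized as in Theorem~\ref{theorem:Fourierinversion}. This is the inversion theorem for continuous integrable functions of positive type, which is established before, and independently of, Plancherel. Cite it directly: it gives both $\hat g\in L^1(\hat G)$ and $g(0)=\int_{\hat G}\hat g\d\xi$. With these repairs your argument is complete.
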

For a proof, see \cite[Theorem 4.25]{Folland}.

\begin{lemma}\label{lemma:compactdiscrete}
If $G$ is compact, then $\hat{G}$ is discrete.
\end{lemma}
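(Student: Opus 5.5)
Recall that the topology on $\hat G$ is the compact-open topology, i.e.\ the topology of uniform convergence on compact subsets of $G$. Since $G$ is itself compact, this is the topology of uniform convergence on all of $G$. To show $\hat G$ is discrete, it suffices to show that the singleton $\{1\}$, where $1$ denotes the trivial character, is open. Translation by a character is a homeomorphism of the topological group $\hat G$, so every singleton is then open.

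The plan is to exhibit the neighbourhood
\begin{align*}
    U:=\{\xi\in\hat G \,\vert\, \sup_{x\in G}\vert \xi(x)-1\vert<1\}
\end{align*}
of the trivial character. This set is open in the compact-open topology precisely because $G$ is compact. The key step is to show $U=\{1\}$.

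Take $\xi\in U$. The image $\xi(G)$ is a subgroup of $\T$ contained in the open arc $\{z\in\T \,\vert\, \vert z-1\vert<1\}$, i.e.\ in the arc of arguments in $(-\pi/3,\pi/3)$. Suppose some $z=\xi(x)=\e^{\i\theta}$ has $\theta\neq 0$, with $0<\vert\theta\vert<\pi/3$. Then the powers $z^n=\xi(nx)$ all lie in $\xi(G)$. Choosing the least $n$ with $n\vert\theta\vert\ge\pi/3$, we get $n\vert\theta\vert<2\pi/3$, so $z^n$ lies outside the arc, a contradiction. Hence $\xi\equiv 1$.

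This argument is elementary. The only point requiring care is the identification of the topology on $\hat G$ and the fact that $U$ is open there, which uses compactness of $G$. An alternative route, which I would mention as a remark, uses orthogonality of characters. Normalize the Haar measure so that $G$ has total mass $1$. Then $\int_G\xi=0$ for nontrivial $\xi$, so by Plancherel the constant function $1\in L^2(G)$ has Fourier transform $\mathds 1_{\{1\}}$ of norm $1$. This forces $\{1\}$ to have positive Haar measure in $\hat G$, which implies discreteness. I would present the first argument as the proof and cite \cite{Folland} for the standard fact.
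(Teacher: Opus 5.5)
Your proof is correct, and it takes a different route from the one the paper relies on. The paper gives no argument of its own and only cites Folland (Proposition 4.4).

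The cited source handles the compact case with Haar measure rather than with the arithmetic of $\T$. Translation invariance gives $\int_G \xi = 0$ for every nontrivial character $\xi$. So the Fourier transform of the constant function $1\in L^1(G)$ is $|G|$ times the indicator of the trivial character. Since Fourier transforms of $L^1$ functions are continuous on $\hat G$, the set $\{\xi : \hat 1(\xi)\neq 0\}=\{1\}$ is open.

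Your main argument instead uses the compact-open description of the topology on $\hat G$. It then uses the fact that the only subgroup of $\T$ inside the arc $\{z\in\T : \vert z-1\vert<1\}$ is trivial. It needs no Haar measure, only compactness of $G$ (to make $U$ open) and the homomorphism property. Your choice of the least $n$ with $n\vert\theta\vert\ge\pi/3$ correctly closes the argument.

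The measure-theoretic route fits the setting where $\hat G$ is topologized as the Gelfand spectrum of $L^1(G)$. There, continuity of $\hat 1$ is automatic, and one does not need the identification with the compact-open topology. Your route is more elementary and purely topological once that identification is available.

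Your closing remark is essentially the measure-theoretic argument, but the detour through Plancherel and ``a point of positive Haar measure forces discreteness'' is unnecessary: continuity of $\hat 1$ gives openness of $\{1\}$ directly. The Plancherel version does buy something extra, namely the normalization. With $\vert G\vert=1$, it shows that every point of $\hat G$ has mass $1$ under the Plancherel-normalized Haar measure, so that measure is counting measure. This is the normalization implicitly behind the paper's writing of integrals over $\hat G$ as plain sums in the compact case.
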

For a proof, see \cite[Proposition 4.4]{Folland}.

We shall also use the standard duality relations for closed subgroups of locally compact abelian groups. If $H\subseteq G$ is closed, then Pontryagin duality identifies
\begin{align*}
    \hat H \cong \hat G/H^\perp,
\qquad
\widehat{G/H} \cong H^\perp,
\end{align*}
see \cite[Theorem 4.39]{Folland}. Thus elements of $\hat H$ may be regarded as cosets in $\hat G/H^\perp$. For notational simplicity, we write such a coset as $\xi_H$. If $\xi\in\hat G$ is a representative of the coset $\xi_H=\xi+H^\perp$, then expressions of the form
\begin{align*}
    \xi_H+\xi^\perp,
\qquad \xi^\perp\in H^\perp,
\end{align*}
are understood as shorthand for $\xi+\xi^\perp$. Whenever such notation appears inside an integral over $H^\perp$, the value of the integral is independent of the chosen representative $\xi$, by translation-invariance of Haar measure on $H^\perp$.

The following lemma fixes the Haar-measure normalization that will be used throughout the paper.
\begin{lemma}\label{lem:Fubini}
Given a locally compact abelian group $G$ and a closed subgroup $H\subseteq G$, there exist measures on $\hat{G}/H^\perp$ and $H^\perp$ such that for all $f \in L^1(G)$ with $\hat{f} \in C_c(\hat{G})$ we have for almost all $x \in G$
\begin{align*}
f(x)
    =\int_{\hat{G}} \hat{f}(\xi) \cdot \xi(x) \d \xi 
    &= \int_{\hat{G}/H^\perp} \int_{H^\perp} \hat{f}(\xi_H+\xi^\perp) \cdot (\xi_H+\xi^\perp)(x) \d \xi^\perp \d \xi_H \\
    &= \int_{H^\perp} \int_{\hat{G}/H^\perp} \hat{f}(\xi_H+\xi^\perp) \cdot (\xi_H+\xi^\perp)(x) \d \xi_H \d \xi^\perp.
\end{align*}
Moreover, if $f \in L^2(G)$ we have 
\begin{align*}
    f
    =\int_{\hat{G}} \hat{f}(\xi) \cdot \xi \d \xi 
    &= \int_{\hat{G}/H^\perp} \int_{H^\perp} \hat{f}(\xi_H+\xi^\perp) \cdot (\xi_H+\xi^\perp) \d \xi^\perp \d \xi_H \\
    &= \int_{H^\perp} \int_{\hat{G}/H^\perp} \hat{f}(\xi_H+\xi^\perp) \cdot (\xi_H+\xi^\perp) \d \xi_H \d \xi^\perp.
\end{align*}
\end{lemma}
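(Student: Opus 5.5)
The plan is to reduce the statement to Weil's quotient integral formula applied to the closed subgroup $H^\perp\subseteq\hat G$, together with Fubini–Tonelli and the Fourier inversion theorem (Theorem \ref{theorem:Fourierinversion}). Since $H^\perp$ is a closed subgroup of the locally compact abelian group $\hat G$, I will fix a Haar measure on $H^\perp$ and choose the Haar measure on the quotient $\hat G/H^\perp$ so that, for every $F\in C_c(\hat G)$,
\begin{align*}
\int_{\hat G} F(\xi)\d\xi=\int_{\hat G/H^\perp}\int_{H^\perp}F(\xi+\xi^\perp)\d\xi^\perp\d\xi_H .
\end{align*}
The inner integral depends only on the coset $\xi_H$ by translation invariance, as already noted in the text. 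By the standard density argument this formula then extends to all $F\in L^1(\hat G)$. For such $F$, the inner integral exists for almost every coset and defines an integrable function on the quotient.

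For the first statement, let $f\in L^1(G)$ with $\hat f\in C_c(\hat G)$. Then $\hat f\in L^1(\hat G)$, so Theorem \ref{theorem:Fourierinversion} gives $f(x)=\int_{\hat G}\hat f(\xi)\xi(x)\d\xi$ for almost all $x$. For fixed $x$, the function $F(\xi)=\hat f(\xi)\xi(x)$ lies in $C_c(\hat G)$, so Weil's formula gives the iterated form with the $H^\perp$-integral inside. To swap the order of integration I would use Tonelli: since $|F|=|\hat f|$ and Weil's formula applied to $|\hat f|$ is finite, $F$ is integrable for the product measure on $\hat G/H^\perp\times H^\perp$. Fubini then gives the third expression.

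One subtlety is that $(\xi_H,\xi^\perp)\mapsto \xi+\xi^\perp$ requires a choice of representative. I would handle this with a Borel cross-section $\hat G/H^\perp\to\hat G$, which exists for locally compact second countable groups and in general by Feldman–Greenleaf. This makes the map measurable, and the value of each inner integral does not depend on the section chosen.

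For the $L^2$ statement, I read $\int_{\hat G}\hat f(\xi)\,\xi\d\xi$ as the inverse Plancherel transform (Theorem \ref{theorem:Plancherel}), defined as an $L^2$-limit. I would approximate $f$ in $L^2$ by functions $f_n$ with $\hat f_n\in C_c(\hat G)$; this is possible because $C_c(\hat G)$ is dense in $L^2(\hat G)$ and Plancherel is onto. For each $f_n$ the identities hold pointwise almost everywhere by the first part, and the left sides converge in $L^2$ to $f$. It then remains to show that the iterated expressions also pass to the limit. I would interpret them as weak (Pettis-type) integrals in $L^2(G)$, i.e.\ pair them with $g\in L^1\cap L^2$. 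Parseval, combined with the Weil/Fubini decomposition of $\hat f\,\overline{\hat g}\in L^1(\hat G)$, shows that the pairing equals $\langle \hat f,\hat g\rangle$ under either order of integration. The identities therefore hold as elements of $L^2(G)$.

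I expect this final step to be the main obstacle: the iterated integrals must be given a meaning in the $L^2$ case and shown independent of the order of integration. The measurability and cross-section issues are routine but need care, whereas the $C_c$ case is straightforward.
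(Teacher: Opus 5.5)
Your proposal is correct and follows essentially the paper's own route. The paper proves the first line by Weil's quotient integral formula \cite[Theorem 2.49]{Folland} for the closed subgroup $H^\perp\subseteq\hat G$ combined with Fourier inversion, swaps the order by Fubini, and deduces the $L^2$ statement from Plancherel; your Borel section and weak pairing with $g\in L^1(G)\cap L^2(G)$ via Parseval only make explicit what the paper leaves implicit, with one small correction: in the third expression the inner integral over $\hat G/H^\perp$ does depend on the chosen section, and only the full iterated integral is section-independent.
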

For $\hat{f} \in C_c(\hat{G})$ a proof can be found in \cite[Theorem 2.49]{Folland} for the first line of equalities; the last equality follows by Fubini. The statement for $f \in L^2(G)$ is a consequence of the Plancherel theorem, see \cite[Theorem 4.25]{Folland}. 

Throughout the paper, we fix Haar measures on $H^\perp$ and $\hat G/H^\perp$ satisfying Lemma \ref{lem:Fubini}. With this normalization, the Fourier inversion formula can be decomposed along the cosets of $H^\perp$.

\subsection{Stationary Kernels}

We recall the definition of stationary kernels used in this paper. The assumptions imposed below are slightly stronger than in the most general formulation of Bochner's theorem, see \cite[Theorem 4.18]{Folland}. They ensure that the covariance function is continuous and that the associated reproducing kernel Hilbert space can be described by Fourier methods.

\begin{definition}
We call a function $k:G \times G \to \R$ a kernel if there exists a Hilbert space $H_0$ and a feature map $\Phi_0:G \to H_0$ such that 
\begin{align*}
    k(x,y)=\langle \Phi_0(x),\Phi_0(y)\rangle_{H_0}, \qquad  \forall x,y \in G.
\end{align*}
We call a kernel $k$ stationary if there exists a function $\hat{k} \in L^1(\hat{G}) \cap L^\infty(\hat{G})$ with $\hat{k}: \hat{G} \to \R $ such that $\hat{k} \geq 0$ almost everywhere and 
\begin{align*}
    k(x,y)= \int_{\hat{G}} \hat{k}(\xi) \xi(x  -y) \d \xi, \qquad \forall x,y \in G.
\end{align*}
\end{definition} 
By the assumption that $\hat{k} \in L^1(\hat{G})$ we have that the kernel $k$ is continuous, see \cite[Lemma 4.28]{SteinwartSVM}.  
We use the notation $\hat{k}$ for the spectral density, although it should not be confused with the Fourier transform of $k$ as a function on $G \times G$.

Associated with $k$ is an RKHS, which we denote by $\mathcal{N}(G)$. If $k$ is a kernel, the RKHS is given by 
\begin{align*}
    \mathcal{N}(G):= \{ f : G \to \R \, \vert \, \exists w \in H_0 \, \, \textup{with} \, \, f(x)=\langle w, \Phi_0(x) \rangle_{H_0} \, \, \textup{for all} \, \, x \in G \}.
\end{align*}
It is equipped with the norm 
\begin{align*}
    \Vert f \Vert_{\mathcal{N}(G)}:= \inf \{ \Vert w \Vert_{H_0} \, \vert \, w \in H_0 \, \, \textup{with} \, \, f(x) = \langle w , \Phi_0(x) \rangle_{H_0} \, \, \textup{for all} \, \, x \in G  \},
\end{align*}
see \cite[Theorem 4.21]{SteinwartSVM}. 

We now focus on stationary kernels $k$ on $G$. Under the assumptions imposed above, the associated RKHS satisfies $\mathcal{N}(G) \subseteq C_b(G) \cap L^2(G)$, see Lemma \ref{lem:RKHSProperties}. Here $C_b(G)$ denotes the space of bounded and continuous functions on $G$. In particular, elements of $\mathcal{N}(G)$ may be Fourier transformed in the $L^2(G)$-sense by the Plancherel theorem \ref{theorem:Plancherel}. Moreover, for $ f\in \mathcal{N}(G)$, the Fourier transform satisfies $\hat{f} \in L^1(\hat{G}) \cap L^2(\hat{G})$ again by Lemma \ref{lem:RKHSProperties}. 

Restricting $k$ to the closed subgroup $H$ yields the kernel $k_{H}:H \times H \to \R$. We denote the associated RKHS by $\mathcal{N}(H)$, see \cite[Chapter 10.7]{wendland2004scattered}. By the restriction and extension theory of RKHSs, every $f \in \mathcal{N}(H)$ admits a unique minimal-norm extension to $\mathcal{N}(G)$. We denote the corresponding extension operator by $M_W:\mathcal{N}(H) \to \mathcal{N}(G)$, where 
\begin{align}\label{eq:condition}
    M_W f := \textup{argmin} \{ \Vert f_G\Vert_{\mathcal{N}(G)} \, \vert \, f_G \in \mathcal{N}(G) \, \, \textup{such that} \, \, f_G|_H=f \}.
\end{align}
For this construction, see \cite[Theorem 10.46]{wendland2004scattered}. 

For the restricted kernel on a closed subgroup $H\subseteq G$, we also need to ensure that the associated RKHS is contained in $L^2(H)$, so that the Fourier transform on $H$ is well-defined. 
Note that the spectral density $\hat{k}_H:\hat{H} \to \R$ is given by 
\begin{align*}
    \hat{k}_H(\xi_H) = \int_{H^\perp} \hat{k}(\xi_H+\xi^\perp) \d \xi^\perp,
\end{align*}
see Lemma \ref{lem:INL2}.
\color{black}
If $G$ is not compact, we therefore impose the additional standing assumption 
\begin{align}\label{eq:ConditionPrelim}
    \sup_{\xi_H \in \hat{G}/H^\perp} \int_{H^\perp} \hat{k}(\xi_H + \xi^\perp) \d \xi^\perp < \infty.
\end{align}
By Lemma \ref{lem:INL2}, this condition implies $\mathcal{N}(H) \subseteq L^2(H)$. Consequently, elements of $\mathcal{N}(H)$ may also be Fourier transformed in the $L^2(H)$-sense.

The main result, Theorem \ref{Theorem:MainResult}, gives an explicit formula for the operator $M_W$ and provides conditions under which it extends to a bounded operator $M:L^2(H) \to L^2(G)$. Since $M_W:\mathcal{N}(H) \to \mathcal{N}(G)$ is bounded by construction, see  \cite[Theorem 10.46]{wendland2004scattered}, the $L^2$-boundedness of its extension also yields, by interpolation space theory, boundedness between the corresponding interpolation spaces associated with $\mathcal{N}$ and $L^2$, see \cite{bergh1976interpolation}.

We will only use interpolation spaces in a simple Hilbert-space setting. Namely, under the Fourier-side identification of $\mathcal{N}(G)$, the spaces obtained by real interpolation between $\mathcal{N}(G)$ and $L^2(G)$, with interpolation exponent $q=2$, can be represented as weighted $L^2$-spaces, see \cite[Section 5]{bergh1976interpolation}. More precisely, for  $0 \leq \theta \leq 1$, we define
\begin{align*}
    \mathcal{N}(G)_{\theta}:= \left\{ f  \in L^2(G) \, \middle\vert \, \int_{\hat{G}} \frac{\vert \hat{f} \vert^2}{\vert\hat{k}\vert^{1-\theta}} (\xi) \, \d \xi < \infty \right\},
\end{align*} 
equipped with the corresponding norm. 
If $\hat{k}$ vanishes on a set of positive Haar measure, we use the convention that the quotient $\hat{f}/\hat{k}$ is infinite unless $\hat{f}=0$ almost everywhere on $\hat{k}=0$.
Analogously, $\mathcal{N}(H)_\theta$ is defined using the spectral density of the restricted kernel on $H$.

\begin{corollary}\label{cor:Interpolation}
Given two bounded operators $T_0:\mathcal{N}(H) \to \mathcal{N}(G)$ and $T_1:L^2(H) \to L^2(G)$ such that $T_0 f =T_1f$ for all $f \in \mathcal{N}(H)$, where the equality is understood in $L^2(G)$, there exists a bounded operator $T_\theta:\mathcal{N}(H)_{\theta} \to \mathcal{N}(G)_\theta$, such that $T_0f=T_\theta f$ for all $f \in \mathcal{N}(H)$ and
\begin{align*}
\Vert T_\theta \Vert_{\mathcal{N}(H)_\theta \to \mathcal{N}(G)_\theta } \leq \Vert T_0 \Vert_{\mathcal{N}(H) \to \mathcal{N}(G)}^{1-\theta} \cdot \Vert T_1 \Vert_{L^2(H) \to L^2(G)}^{\theta}. 
\end{align*}
\end{corollary}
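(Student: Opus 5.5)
This is a direct application of the real interpolation theorem, once both endpoint pairs are recognised as compatible couples and the $\mathcal{N}(\cdot)_\theta$ spaces are identified with the corresponding interpolation spaces.

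\emph{Setting up the couples.} On the source side take $\bar A=(\mathcal{N}(H),L^2(H))$, on the target side $\bar B=(\mathcal{N}(G),L^2(G))$. Both are compatible couples, since by Lemma \ref{lem:RKHSProperties} and the standing assumption (\ref{eq:ConditionPrelim}) via Lemma \ref{lem:INL2} we have continuous embeddings $\mathcal{N}(G)\hookrightarrow L^2(G)$ and $\mathcal{N}(H)\hookrightarrow L^2(H)$. Thus $A_0+A_1=L^2(H)$, $A_0\cap A_1=\mathcal{N}(H)$, and likewise on $G$.

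\emph{Identifying the spaces.} Under Plancherel (Theorem \ref{theorem:Plancherel}), $\mathcal{N}(G)$ is the weighted space $L^2(\hat G,\hat k^{-1}\,d\xi)$; this is the Fourier description from Lemma \ref{lem:RKHSProperties}, with the stated convention on $\{\hat k=0\}$. Likewise $L^2(G)$ is $L^2(\hat G,d\xi)$. By the Stein--Weiss theorem for weighted $L^2$ spaces \cite[Theorem 5.4.1 and Section 5]{bergh1976interpolation}, $(L^2(w_0),L^2(w_1))_{\theta,2}=L^2(w_0^{1-\theta}w_1^{\theta})$ with equivalent norms. Here $w_0=\hat k^{-1}$ and $w_1=1$, so the weight is $\hat k^{-(1-\theta)}$ and the interpolation space is $\mathcal{N}(G)_\theta$. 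The same argument on $H$, with $\hat k_H$ from Lemma \ref{lem:INL2}, gives $\mathcal{N}(H)_\theta$.

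\emph{Defining $T_\theta$.} Since $T_0=T_1$ on $\mathcal{N}(H)=A_0\cap A_1$, the pair defines a single linear map on $A_0+A_1=L^2(H)$, namely $T_1$ itself. Concretely, $T_1$ restricted to $A_0$ agrees with $T_0$ and maps $A_0$ boundedly into $B_0$. The general interpolation theorem for the real method \cite[Theorem 3.1.2]{bergh1976interpolation} then gives that $T_\theta:=T_1|_{\mathcal{N}(H)_\theta}$ maps $\mathcal{N}(H)_\theta$ into $\mathcal{N}(G)_\theta$. It also gives the bound $\|T_\theta\|\le\|T_0\|^{1-\theta}\|T_1\|^\theta$ in the interpolation norms, and $T_\theta=T_0$ on $\mathcal{N}(H)$ is automatic.

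\emph{Main obstacle: the constant.} The delicate point is getting the bound with constant exactly $1$ rather than up to the norm equivalence. With the $K$-method normalisation, $(L^2(w_0),L^2(w_1))_{\theta,2}$ equals $L^2(w_0^{1-\theta}w_1^\theta)$ only up to a constant $c_\theta$. That constant cancels, because the same normalisation appears on both sides. So I would equip $\mathcal{N}(\cdot)_\theta$ with the norm $c_\theta^{-1}\|\cdot\|_{\theta,2}$, which coincides exactly with the weighted $L^2$ norm by the pointwise computation of the $K$-functional for weighted $L^2$ spaces; the exact bound then follows.

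Alternatively, I would use complex interpolation $[\cdot,\cdot]_\theta$, which for weighted $L^2$ spaces gives the weighted norm isometrically. The Riesz--Thorin/Stein argument via the three-lines lemma then yields the exact constant $\|T_0\|^{1-\theta}\|T_1\|^\theta$. One complication is that $\mathcal{N}$ consists of real-valued functions, so I would first complexify, which does not change the operator norms in the Hilbert setting. The endpoint cases $\theta=0,1$ are trivial and are handled separately.
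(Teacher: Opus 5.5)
Your complex-interpolation alternative is essentially the paper's proof. The paper gives no argument of its own and simply cites \cite[Corollary 5.5.4]{bergh1976interpolation}, the Stein--Weiss interpolation theorem. That theorem follows from the isometric identity $[L^2(w_0),L^2(w_1)]_\theta=L^2(w_0^{1-\theta}w_1^{\theta})$ together with the exactness of the complex method. Here it is applied on the Fourier side with $w_0=\hat k^{-1}$ (resp.\ $\hat k_H^{-1}$) and $w_1=1$. Your remarks on complexification (isometric for Hilbert spaces) and on the standing assumption \eqref{eq:ConditionPrelim} supply details the paper leaves implicit. Since Stein--Weiss is stated for finite weights, the set $\{\hat k=0\}$, where $w_0=\infty$, should be split off. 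This is harmless: there $A_0=\{0\}$, and both methods return $\{0\}$ for $\theta<1$, which matches the paper's convention.

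The real-method route you put first does not deliver the constant $1$ as written. With the standard functional $K(t,f)=\inf_{f=f_0+f_1}\bigl(\Vert f_0\Vert_{A_0}+t\Vert f_1\Vert_{A_1}\bigr)$, the $(\theta,2)$-norm of $(L^2(w_0),L^2(w_1))$ is only equivalent to the weighted norm, not a fixed multiple of it. The reason is that a sum of two square roots does not decouple frequency by frequency. For example, take a two-point space with $w_0=(1,R)$, $w_1=(1,1)$ and $\theta=1/2$. Every $f$ supported at one point satisfies $\Vert f\Vert_{1/2,2}^2=2\Vert f\Vert_{L^2(w_0^{1/2}w_1^{1/2})}^2$. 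By contrast, $f=(1,1)$ gives $\Vert f\Vert_{1/2,2}^2=2\sqrt R+\log R+O(1)$ as $R\to\infty$, not $2(1+\sqrt R)$. So no rescaling $c_\theta^{-1}$ makes the norms coincide, and with the standard $K$ this route only gives the bound up to a factor $\sqrt 2$.

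The exact pointwise computation you have in mind is valid for the quadratic functional $K_2(t,f)=\inf_{f=f_0+f_1}\bigl(\Vert f_0\Vert_{A_0}^2+t^2\Vert f_1\Vert_{A_1}^2\bigr)^{1/2}$. For it one has exactly $\int_0^\infty t^{-2\theta}K_2(t,f)^2\,\frac{\mathrm{d}t}{t}=\frac{\pi}{2\sin \pi\theta}\Vert f\Vert_{L^2(w_0^{1-\theta}w_1^{\theta})}^2$. Moreover, $K_2(t,Tf)\le M_0K_2(tM_1/M_0,f)$ whenever $\Vert T\Vert_{A_j\to B_j}\le M_j$. Running your argument with $K_2$ instead of $K$ therefore does give the constant $M_0^{1-\theta}M_1^{\theta}$. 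Otherwise, rely on the complex route.
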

For a proof we refer to \cite[Corollary 5.5.4]{bergh1976interpolation}.

\subsection{Gaussian Random Variables}
The main objective is to consider a Gaussian random variable $X$ whose values are functions, or equivalence classes of functions, from $G$ to $\R$, and then condition $X$ on its restriction to a subgroup $H$, writing $Y=X|_H$. This induces an operator $M$ such that $MY=\mathbb{E}(X\vert Y)$. In the main result, Theorem \ref{Theorem:MainResult}, we investigate when this operator is a bounded mapping between $L^2(H)$ and $L^2(G)$. 

Throughout this work, $(\Omega, \mathcal{A}, \mu)$ denotes a probability space, $ E $ and $F$ are separable Banach spaces, and $ E' $ denotes the dual of $E$.
A random variable $ X: \Omega \to \mathbb{R} $ is called a (one-dimensional) Gaussian random variable if there exist $ \mu_X, \sigma_X \in \mathbb{R} $ such that
\begin{align*}
\mathbb{E}\left( e^{i t X} \right) = e^{i t \mu_X - \frac{1}{2} \sigma_X^2 t^2}, \quad \text{for all } t \in \mathbb{R}.
\end{align*}
In this case, we write $ X \sim \mathcal{N}(\mu_X,\sigma_X^2) $. 
For $1 \leq p < \infty$ let $L^p(\mu,E)$ denote the Bochner space, see \cite[Chapter 1]{martingal}.
We note that $ \sigma_X = 0 $ yields a Dirac measure at $ \mu_X $, while $ \sigma_X > 0 $ corresponds to the usual normal distribution.

A random variable $ X: \Omega \to E $ is called Gaussian if for every $ e' \in E' $, the real-valued random variable $ e'(X) :\Omega \to \mathbb{R} $ is Gaussian. We note that separability of $E$ ensures measurability of $X$. Additionally, Fernique’s theorem ensures $X \in L^p(\mu,E) \, \, \text{for all } p \geq 1$, see \cite[Theorem 5.3]{Fernique53}. We call $X$ centered if $\mathbb{E}(X)=0$. From this point on $X$ shall always be centered.

The main setting of this work is $E=L^2(G)$ and $F=L^2(H)$. Other spaces, such as $L^p$ spaces with $1 \leq p \leq \infty$ or spaces of bounded continuous functions, are also natural, but will not be treated in detail here.

Assume now that $G$ is compact and that $X:\Omega \to L^2(G)$ is a Gaussian random variable. Since elements of $L^2(G)$ are equivalence classes, the restriction $X|_H$ is not automatically well-defined if $H$ has Haar measure zero as a subset of $G$. 
We therefore assume that $X$ admits a continuous version, that is, there exists a representative $\widetilde X(\omega)\in X(\omega)$ with $\widetilde X(\omega)\in C(G)$ for $\mu$-almost all $\omega\in\Omega$. We now define
\begin{align*}
    Y(\omega):=\widetilde X(\omega)|_H
\end{align*}
as an element of $L^2(H)$. In this way, the restriction of $X$ to $H$ becomes a well-defined Gaussian random variable $Y:\Omega \to L^2(H)$. The same continuous version also allows us to define the covariance kernel $k_X:G \times G \to \R$ of $X$ pointwise by
\begin{align*}
    k_X(x,y):= \langle \delta_x(\widetilde X),\delta_y(\widetilde X)\rangle_{L^2(\Omega)},
\end{align*}
with $x,y \in G$, where $\delta_x$ denotes the point evaluation at $x$. We call a Gaussian random variable $X$ stationary if $k_X$ is a stationary kernel.  

We denote the RKHS associated with the covariance kernel $k_X$ by $W_X$ and call it the abstract Wiener space of $X$. 
Similarly, if $Y:=\tilde{X}|_H$, then the RKHS associated with $k_Y$ is denoted by $W_Y$. Since $k_Y=k_X|_{H \times H}$, the restriction map induces a canonical bounded operator $L_W:W_X \to W_Y$, by $L_W f= f|_H$. 
Calculating the adjoint of $L_W$, one obtains the mapping $M_W:W_Y \to W_{X}$. The mapping $M_W$ is precisely the minimal-norm extension operator from the restricted RKHS to the original RKHS. In the Gaussian setting, this operator describes the conditional mean. 
More precisely, if $M_W$ extends to a bounded operator $M:L^2(H) \to L^2(G) $ then $MY=\E(X|Y)$, see \cite{WinkleAnalysis}.

If $G$ is not compact, stationary Gaussian random variables need not take values in $L^2(G)$. 
Indeed, for a stationary process the pointwise variance is constant, and the Haar measure of $G$ may be infinite. Thus, an $L^2(G)$-valued formulation generally requires additional structure, for instance the use of weighted spaces $L^2(G,w)$ with a suitable weight $w\in L^1(G)$. Similar issues arise when working with spaces of continuous functions equipped with weighted norms. For this reason, the probabilistic interpretation in terms of $L^2(G)$-valued Gaussian random variables will mainly be considered for compact abelian groups. A standard example is the $d$-dimensional torus $\mathbb T^d$.

Finally, we emphasize that this paper works with real-valued Gaussian random variables. Since harmonic analysis naturally involves complex characters, it may also be interesting to develop an analogous theory for complex-valued Gaussian random variables. For background on complex Gaussian analytic functions, see \cite{hough2009zeros}.

\section{Main Results}\label{sec:results}
The proofs of the main results Theorem \ref{Theorem:MainResult} and Theorem \ref{Theorem:MainCompactResult} can be found in Appendix \ref{sec:5}.
The central object is the minimal-norm extension operator
\begin{align*}
    M_W:\mathcal N(H)\to\mathcal N(G),
\end{align*}
which is naturally defined on the RKHS associated with the restricted kernel. In the probabilistic setting, this operator describes the conditional mean after observing a Gaussian random variable on the subgroup $H$, see \cite{WinkleAnalysis}. However, in applications one often wants to apply the conditional expectation operator to observations that are not given as elements of the RKHS, but rather as elements of a larger space such as $L^2(H)$, because realizations of Gaussian random variables leave the RKHS \cite{driscoll1973reproducing,lukic2001stochastic}. This leads to the question whether $M_W$ admits a bounded extension
\begin{align*}
    M:L^2(H)\to L^2(G).
\end{align*}

This boundedness question is important for approximation. Even if an explicit analytic formula for $M$ is available, it may be difficult to evaluate directly. If $M$ is bounded on $L^2(H)$, then an observation $g\in L^2(H)$ can be approximated by functions $g_n\in\mathcal N(H)$, and the images $M_Wg_n$ converge to $Mg$ in $L^2(G)$. In particular, when the approximants $g_n$ are finite linear combinations of kernel translates on $H$ or linear combinations of characters, meaning that $g_n=\sum_{j=1}^n \hat{g}_n(\xi_j) \cdot\xi_j$ for some $\xi_j \in \hat{H}$, the corresponding extensions $M_Wg_n$ can be computed explicitly by linearity. Thus the $L^2$-boundedness of $M$ provides a practical way to approximate the conditional mean operator. If, in addition, $M$ is bounded from below, meaning that there exist $c,C>0$ such that
\begin{align*}
c\Vert g\Vert_{L^2(H)}
\leq
\Vert Mg \Vert_{L^2(G)}
\leq
C\Vert g\Vert_{L^2(H)}
\qquad
\text{for all } g\in L^2(H),
\end{align*}
and if $g_n$ is a best approximation of $g$ from a finite-dimensional subspace $V_n\subseteq L^2(H)$, then $Mg_n$ is a quasi-best approximation of $Mg$ from $M(V_n)$.

The first theorem gives an explicit Fourier representation of $M_W$ for stationary kernels on locally compact abelian groups and characterizes when this operator extends boundedly from $L^2(H)$ to $L^2(G)$. The second theorem specializes this result to compact groups, where it has a direct interpretation for stationary Gaussian random variables.
\begin{theorem}\label{Theorem:MainResult}
    Let $G$ be a locally compact abelian group and let $H\subseteq G$ be a closed subgroup. Let $k:G\times G \to \R$ be a stationary kernel with spectral density $\hat{k} \in L^1(\hat{G}) \cap L^\infty(\hat{G})$, and assume that the spectral density $\hat{k}_H(\xi_H)$ of the kernel $k_H:H \times H \to \R$ given by $k_H(x,y):=k(x,y)$ for all $x,y \in H$ belongs to $ L^\infty(\hat{H})$. Then the minimal-norm extension operator $M_W: \mathcal{N}(H) \to \mathcal{N}(G)$ is given on the Fourier side by 
    \begin{align*}
        \widehat{M_W g}(\xi_H+\xi^\perp) = \hat{g}(\xi_H) \frac{\hat k(\xi_H+\xi^\perp)}
{\hat k_H(\xi_H)} 
    \end{align*}
    for $g \in \mathcal{N}(H)$, for almost all $\xi_H \in \hat{G}/H^\perp$ and $\xi^\perp \in H^\perp$. On the set where $\hat{k}_H=0$, the quotient is interpreted as zero.  

    Set $A:\hat{G}/H^\perp \to \R$ as
    \begin{align*}
        A(\xi_H):= \frac{
\int_{H^\perp}
|\hat k(\xi_H+\xi^\perp)|^2 \d\xi^\perp
}{
|\hat k_H(\xi_H)|^2
},
\qquad
\xi_H\in \hat G/H^\perp,
    \end{align*}
with the convention $A(\xi_H)=0$ whenever $\hat{k}_H(\xi_H)=0$. If $A \in L^\infty(\hat{G}/H^\perp)$ then the same Fourier formula defines a bounded operator $M:L^2(H) \to L^2(G)$ satisfying $M|_{\mathcal{N}(H)}=M_W$, and 
\begin{align*}
    \Vert M \Vert_{L^2(H) \to L^2(G)}^2 = \esssup_{\xi_H \in \hat{G}/H^\perp} A(\xi_H).
\end{align*}
Moreover, 
\begin{align*}
    \inf_{ \Vert g \Vert_{L^2(H)}=1} \Vert M g \Vert_{L^2(G)}^2 = \essinf_{\xi_H \in \hat{G}/H^\perp} A(\xi_H).
\end{align*}
Finally, for $0 \leq \theta \leq 1$, the operator $M$ restricts to a bounded operator $M_\theta:\mathcal{N}(H)_\theta \to \mathcal{N}(G)_\theta$ and 
\begin{align*}
\Vert M_\theta \Vert_{\mathcal{N}(H)_\theta \to \mathcal{N}(G)_\theta} \leq \esssup_{\xi_H \in \hat{G}/H^\perp} A(\xi_H)^{\theta/2}.
\end{align*}
\end{theorem}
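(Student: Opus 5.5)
The plan is to work entirely on the Fourier side, using the standard description of the RKHS of a stationary kernel:
\[
\mathcal N(G)=\Bigl\{f\in L^2(G)\cap C_b(G): \int_{\hat G}\frac{|\hat f|^2}{\hat k}\,\mathrm d\xi<\infty\Bigr\},
\qquad
\|f\|_{\mathcal N(G)}^2=\int_{\hat G}\frac{|\hat f|^2}{\hat k}\,\mathrm d\xi ,
\]
and the same description for $\mathcal N(H)$ with $\hat k_H$. I take this from Lemma \ref{lem:RKHSProperties} and Lemma \ref{lem:INL2}. The second lemma also gives $\hat k_H(\xi_H)=\int_{H^\perp}\hat k(\xi_H+\xi^\perp)\,\mathrm d\xi^\perp$.

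\textbf{Step 1: restriction on the Fourier side.} Take $f\in\mathcal N(G)$, so $\hat f\in L^1(\hat G)$. Apply Fourier inversion and then the fibration of Lemma \ref{lem:Fubini}. For $h\in H$ the characters $\xi^\perp\in H^\perp$ equal $1$ at $h$, so
\[
f(h)=\int_{\hat G/H^\perp}\Bigl(\int_{H^\perp}\hat f(\xi_H+\xi^\perp)\,\mathrm d\xi^\perp\Bigr)\xi_H(h)\,\mathrm d\xi_H .
\]
Hence $\widehat{f|_H}(\xi_H)=\int_{H^\perp}\hat f(\xi_H+\xi^\perp)\,\mathrm d\xi^\perp$. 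Continuity of $f$ is what makes the pointwise identity on the null set $H$ legitimate.

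\textbf{Step 2: fibrewise minimisation.} The minimal-norm extension problem now decouples over the fibres $\xi_H$. For each fixed $\xi_H$ we minimise $\int_{H^\perp}|\hat f|^2/\hat k$ over all $\hat f$ on the fibre with prescribed integral $\hat g(\xi_H)$. By Cauchy--Schwarz,
\[
|\hat g(\xi_H)|^2\le \int_{H^\perp}\frac{|\hat f|^2}{\hat k}\,\mathrm d\xi^\perp\cdot \hat k_H(\xi_H),
\]
with equality exactly when $\hat f=c\,\hat k$ on the fibre. This gives the minimiser $\hat f=\hat g(\xi_H)\hat k/\hat k_H$. Integrating over $\xi_H$, this candidate has norm $\|g\|_{\mathcal N(H)}$, which is a lower bound for the norm of every extension. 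We must also check three things:
\begin{itemize}
\item the candidate $f_G$ lies in $L^1\cap L^2$ on the Fourier side, so that it defines a continuous function in $\mathcal N(G)$; this uses $\hat k\in L^1\cap L^\infty$, $\hat k_H\in L^\infty$ and $\hat g\in L^1$;
\item its restriction to $H$ is $g$, by Step 1;
\item the null set where $\hat k_H=0$ forces $\hat g=0$ there, so the convention for the quotient is harmless.
\end{itemize}
By uniqueness of the minimiser, this identifies $M_W$. I expect this step to be the main obstacle. The measurability and integrability bookkeeping, and justifying Fubini for $L^2$ rather than $L^1$ functions, take some care, and one must make sure the RKHS norm characterisation of $\mathcal N(H)$ uses exactly $\hat k_H$.

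\textbf{Step 3: $L^2$ operator norm.} Define $M$ on $L^2(H)$ by the same formula. By Plancherel and the $L^2$ part of Lemma \ref{lem:Fubini},
\[
\|Mg\|_{L^2(G)}^2=\int_{\hat G/H^\perp}|\hat g(\xi_H)|^2 A(\xi_H)\,\mathrm d\xi_H .
\]
So $M$ is unitarily equivalent to the multiplication operator by $\sqrt A$ from $L^2(\hat H)$ to $L^2(\hat G)$. This gives $\|M\|^2\le\operatorname{ess\,sup}A$ and $\inf_{\|g\|=1}\|Mg\|^2\ge\operatorname{ess\,inf}A$. For sharpness, test with $\hat g$ equal to the normalised indicator of a set of finite positive measure on which $A>\operatorname{ess\,sup}A-\varepsilon$, and likewise $A<\operatorname{ess\,inf}A+\varepsilon$ for the lower bound. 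Such sets exist because the Haar measure on $\hat H$ is $\sigma$-finite on the relevant sets; in the worst case one intersects with a compact set. Since $M$ and $M_W$ agree on $\mathcal N(H)$ by the same formula, $M|_{\mathcal N(H)}=M_W$.

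\textbf{Step 4: interpolation.} Apply Corollary \ref{cor:Interpolation} with $T_0=M_W$ and $T_1=M$. Here $\|M_W\|\le1$, because a minimal-norm extension has norm equal to $\|g\|_{\mathcal N(H)}$, and $\|M\|=(\operatorname{ess\,sup}A)^{1/2}$. This yields the bound $(\operatorname{ess\,sup}A)^{\theta/2}$.
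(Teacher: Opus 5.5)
Your proposal is correct and follows the paper's proof essentially step by step. You use the Fourier-side restriction formula and the fibrewise minimisation of Lemma~\ref{lem:minimizationproblem}; your Cauchy--Schwarz bound is the paper's Riesz-representer argument, and your explicit check that the assembled candidate lies in $\mathcal N(G)$ and restricts to $g$ fills in what the paper leaves implicit. You then use the identity $\Vert Mg\Vert_{L^2(G)}^2=\int A\,|\hat g|^2$ with normalised-indicator test functions, and Corollary~\ref{cor:Interpolation} with $\Vert M_W\Vert\le 1$, exactly as the paper does.

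One small wording fix: $M$ is not unitarily equivalent to a multiplication operator from $L^2(\hat H)$ to $L^2(\hat G)$. Rather, $M^*M$ is unitarily equivalent to multiplication by $A$ on $L^2(\hat H)$, and that is all you actually use.
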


The next theorem focuses on compact $G$, allowing us to consider stationary Gaussian random variables $X$.

\begin{theorem}\label{Theorem:MainCompactResult}
    Let $G$ be a compact abelian group and let $H\subseteq G$ be a closed subgroup. Let
$X:\Omega\to L^2(G)$ be a centered stationary Gaussian random variable with continuous realizations, and let $k_X$ be its covariance kernel with spectral density $\hat k_X$. Define $Y:=\tilde{X}|_H$. Then the Fourier formula 
\begin{align*}
    \widehat{Mg}(\xi)= \hat{g}(\xi_H) \frac{\hat{k}_X(\xi)}{\hat{k}_Y(\xi_H)}, \qquad \xi \in \hat{G}, \quad \xi_H =\xi+H^\perp,
\end{align*}
with the quotient interpreted as zero whenever $\hat{k}_Y(\xi_H)=0$, defines a bounded operator $M:L^2(H) \to L^2(G)$. Equivalently 
\begin{align*}
    Mg
=
\sum_{\xi\in\widehat G}
\hat g(\xi_H)
\frac{\hat k_X(\xi)}
{\hat k_{Y}(\xi_H)}
\xi,
\qquad
\xi_H:=\xi+H^\perp.
\end{align*} 
Moreover, 
\begin{align*}
  \Vert  Mg\Vert_{L^2(G)}^2
\leq
\left[
\sup_{\xi_H\in\hat G/H^\perp}
\frac{
\sum_{\xi^\perp\in H^\perp}
|\hat k_X(\xi_H+\xi^\perp)|^2
}{
\left|
\sum_{\xi^\perp\in H^\perp}
\hat k_X(\xi_H+\xi^\perp)
\right|^2
}
\right]
\Vert g\Vert_{L^2(H)}^2
\leq
\Vert g\Vert_{L^2(H)}^2 .
\end{align*}
If $W_Y$ is dense in $L^2(H)$, then $M$ is the unique bounded extension of the minimal-norm extension operator $M_W:W_Y \to W_X$. In this case 
\begin{align*}
    MY= \E(X \vert Y).
\end{align*} 
Finally, since $W_X=\mathcal{N}(G)$ and $W_Y=\mathcal{N}(H)$, it follows that, for $0 \leq \theta \leq 1$, the operator $M$ restricts to a bounded operator $M_\theta:\mathcal{N}(H)_\theta \to \mathcal{N}(G)_\theta$ and
\begin{align*}
    \Vert M_\theta \Vert_{\mathcal{N}(H)_\theta \to \mathcal{N}(G)_\theta} \leq 1.
\end{align*}
\end{theorem}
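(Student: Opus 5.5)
The plan is to reduce Theorem \ref{Theorem:MainCompactResult} to Theorem \ref{Theorem:MainResult}, using that $G$ is compact, and then to use the Gaussian-conditioning result of \cite{WinkleAnalysis}.

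\emph{Checking the hypotheses.} By Lemma \ref{lemma:compactdiscrete}, $\hat G$ is discrete, and so are $H^\perp$ and $\hat G/H^\perp\cong\hat H$. With counting measures (normalized as in Lemma \ref{lem:Fubini}) all integrals over $H^\perp$ and $\hat G/H^\perp$ become sums. Since $k_X$ is stationary, $\hat k_X\in L^1(\hat G)$, and
\[
\hat k_Y(\xi_H)=\sum_{\xi^\perp\in H^\perp}\hat k_X(\xi_H+\xi^\perp)\le \|\hat k_X\|_{\ell^1},
\]
so $\hat k_Y\in L^\infty(\hat H)$. Hence Theorem \ref{Theorem:MainResult} applies to $k=k_X$, and it gives the Fourier formula for $M_W$.

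\emph{Bounding $A$.} The key pointwise estimate is that, for nonnegative summands,
\[
\sum_{\xi^\perp}\hat k_X(\xi_H+\xi^\perp)^2\le\Bigl(\sum_{\xi^\perp}\hat k_X(\xi_H+\xi^\perp)\Bigr)^2 .
\]
This gives $A\le1$, with $A=0$ by convention where $\hat k_Y=0$. Theorem \ref{Theorem:MainResult} then shows that $M$ is bounded on $L^2(H)$, that $\|Mg\|^2\le(\sup A)\|g\|^2\le\|g\|^2$, and that $\|M_\theta\|\le (\sup A)^{\theta/2}\le1$. The series representation of $Mg$ is the $L^2$ Fourier inversion on discrete $\hat G$, by Lemma \ref{lem:Fubini} and Theorem \ref{theorem:Plancherel}.

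\emph{Identifications.} The identities $W_X=\mathcal N(G)$ and $W_Y=\mathcal N(H)$ hold because these are the RKHSs of $k_X$ and of $k_Y=k_X|_{H\times H}$. If $W_Y$ is dense in $L^2(H)$, then any two bounded extensions of $M_W$ agree on a dense set, so $M$ is the unique bounded extension. The identity $MY=\E(X\mid Y)$ then follows from the cited result of \cite{WinkleAnalysis}: whenever $M_W$ extends boundedly to $M:L^2(H)\to L^2(G)$, one has $MY=\E(X\mid Y)$.

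\emph{Main obstacle.} The delicate step is the applicability of that citation. It requires $Y=\tilde X|_H$ to be a well-defined $L^2(H)$-valued Gaussian random variable with covariance kernel $k_Y$, and $M$ to agree with $M_W$ on a set that determines it. I would handle the first point through the continuous version, which makes $Y$ measurable with covariance $k_X|_H$. I would handle the second by using density of $W_Y$ in $L^2(H)$, or, on the Fourier side, by noting that characters with $\hat k_Y(\xi_H)>0$ lie in $W_Y$ and span the support of the law of $Y$.
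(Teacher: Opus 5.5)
Your proposal is correct and follows essentially the same route as the paper. Discreteness of $\hat G$ turns the fibre integrals into sums, $\hat k_Y\in L^\infty(\hat H)$ follows from summability of $\hat k_X$, and the inequality $\sum a_{\xi^\perp}^2\le(\sum a_{\xi^\perp})^2$ gives $A\le 1$; Theorem~\ref{Theorem:MainResult}, density of $W_Y$ for uniqueness, and the result of \cite{WinkleAnalysis} then yield the remaining claims. The Fourier-side alternative in your last paragraph is not needed, since the statement only asserts $MY=\E(X\mid Y)$ under density of $W_Y$; if you did pursue it, you would need real and imaginary parts of characters, because $W_Y$ consists of real-valued functions.
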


\color{black}
\section{Examples}\label{sec:examples}\label{sec:4}

We begin this section with a counterexample to the contraction statement in \cite[Theorem 2.4]{lagatta2013continuous}. Afterwards, we discuss two examples in which the abstract results can be written in elementary Fourier-analytic terms. The first example is cardinal interpolation, corresponding to $G=\mathbb R$ and the subgroup $H_h=h\mathbb Z$. The second example is set on the two-dimensional torus $G=\mathbb T^2$, where we consider the one-dimensional closed subgroup $H_n:=\{(a,na)\in\mathbb T^2\mid a\in\mathbb R/\mathbb Z\}$.

\subsection{Counterexample to Theorem 2.4 in \texorpdfstring{\cite{lagatta2013continuous}}{Lagatta et al. (2013)}}
We first recall the consequence of \cite[Theorem 2.4 and Corollary 2.5]{lagatta2013continuous} that will be relevant for the counterexample below. Let $X:\Omega\to C([0,2])$ be a centered Gaussian random variable with covariance kernel
\begin{align*}
k_X(t,s)=\kappa(t-s),
\qquad t,s\in[0,2],
\end{align*}
where $\kappa:[-2,2]\to\mathbb R$. For an interval $I\subseteq[0,2]$, set $Y:=X|_I$. The cited result would imply the existence of an operator
$M:C(I)\to C([0,2])$
such that
\begin{align*}
MY=\mathbb E(X\mid Y) \qquad \textup{and} \qquad \Vert M\Vert_{C(I)\to C([0,2])}\leq 1,
\end{align*}
where both spaces are equipped with the supremum norm. We now show that this contraction estimate cannot hold in this generality.

For the counterexample, consider the Gaussian covariance function $\kappa:[-2,2] \to \R$ with $\kappa(t)=\e^{-t^2}$. The corresponding Gaussian process has analytic sample paths, see \cite{belyaev1959analytic}. Hence observing the process on any non-empty interval determines it everywhere by analytic continuation. In the RKHS formulation, the minimal-norm extension operator therefore coincides with the unique analytic continuation operator, see also \cite{lichtenberger1974note}.

Let $f:[0,2] \to \R$ be given by $f(x)=x^2 \cdot \e^{-x^2}$. Then $f \in W_X$. Moreover, $f$ attains its maximum in $[0,2]$ at $x=1$, hence $\Vert f\Vert_{C([0,2])}=f(1)=\frac{1}{\mathrm e}$. Now choose $I=[0,1/2]$. Since $f$ is increasing on $I$, we have $\Vert f \Vert_{C(I)} = f(1/2) =1/(4\e^{1/4})$. Since $f$ is analytic and $M_W$ is the analytic continuation operator, we have $M_W (f|_I)=f$. 
Thus, any operator $M:C(I)\to C([0,2])$ that extends this conditional expectation operator must satisfy 
\begin{align*}
\Vert M \Vert_{C(I) \to C([0,2])} \geq \frac{\Vert f\Vert_{C([0,2])}}{\Vert f|_I\Vert_{C(I)}} = 4 \mathrm{e}^{-3/4} >1.
\end{align*}
Consequently, the contraction estimate claimed in \cite[Theorem~2.4 and Corollary~2.5]{lagatta2013continuous} cannot hold in this generality. The error in the proof of \cite[Theorem~2.4]{lagatta2013continuous} lies in the incorrect assumption that the linear span of the point-evaluation functionals is norm-dense in the dual space $C(I)'$. In fact, this span is only weak-$*$-dense in $C(I)'$, see \cite[Page 114]{ReedSimonI}, which is insufficient to establish the conclusion of \cite[Theorem~2.4]{lagatta2013continuous}.

\subsection{Cardinal Interpolation}

In the context of cardinal interpolation, we refer to \cite{hangelbroek2012cardinal} for Gaussian kernels and to \cite{bejancu2022uniformly} for Matérn-type kernels.

For simplicity, we restrict ourselves to the one-dimensional case. Thus, let $G=\R$ and $H_h=h \Z$ with $h>0$. We use the Fourier transform convention
\begin{align*}
    \hat{f}(t)= \int_{\R} f (x) \e^{-2\pi \i xt } \d x, \quad \textup{and} \quad f(x)=\int_{\R} \hat{f}(t) \e^{2\pi \i x t} \d t
\end{align*}
where the inversion formula is understood in the $L^2(\mathbb R)$-sense for $f\in L^2(\mathbb R)$.

On $h\mathbb Z$, we use the normalized counting measure $h\sum_{m\in\mathbb Z}\delta_{hm}$. Thus, for $g\in\ell^2(h\mathbb Z)$, 
\begin{align*}
    \Vert g\Vert_{\ell^2(h\mathbb Z)}^2
=
h\sum_{m\in\mathbb Z}|g(hm)|^2.
\end{align*}
With this normalization, the Fourier transform on $h\mathbb Z$ is given by
\begin{align*}
    \hat{g}(t)=h \sum_{m \in \Z} g(hm) \e^{-2\pi \i hmt},\qquad
t\in[0,1/h), 
\end{align*}
and the inverse formula is 
\begin{align*}
    g(hm)= \int_0^{1/h} \hat{g}(t) \e^{2\pi \i hm t } \d t, \qquad m \in \Z.
\end{align*}
Again, for $g\in\ell^2(h\mathbb Z)$, this is understood in the corresponding $L^2$-Fourier sense.
Given a sufficiently smooth function $f \in C_c(\R)$ and set $g:=f|_{H_h}$ we obtain for $m\in \Z$ that $g(hm)=f(hm)$. The Fourier coefficients of $g$ are thus given by 
\begin{align*}
    \hat{g}(t)=  \sum_{j \in \Z} \hat{f}\left( t+ \frac{j}{h} \right), \quad \forall t \in [0,1/h).
\end{align*}

Before applying Theorem \ref{Theorem:MainResult}, we need to assume that the Condition \eqref{eq:ConditionPrelim} holds meaning 
\begin{align}\label{eq:cardinalCondition}
    \sup_{t \in [0,1/h)} \sum_{j \in \Z} \hat{k}\left( t + \frac{j}{h} \right) < \infty.
\end{align}
Here the annihilator of $h\mathbb Z$ is $\frac1h\mathbb Z$, and $\hat{\mathbb R}/(h\mathbb Z)^\perp$ is identified with $[0,1/h)$.

We denote the operator of the minimal-norm solution of \eqref{eq:condition} by $M_h$ since it depends on $h$.
By Theorem \ref{Theorem:MainResult}, the Fourier transform of $M_h g$ is then given by 
\begin{align*}
    \widehat{M_h g}\left(t+\frac{j}{h} \right) =  \hat{g}(t) \frac{\hat{k}\left(t+\frac{j}{h} \right)}{\sum_{l \in \Z} \hat{k}\left( t+ \frac{l}{h} \right)} \qquad t\in[0,1/h),\ j\in\mathbb Z .
\end{align*}
Define 
\begin{align*}
    A_h(t):=\frac{\sum_{j \in \Z} \left( \hat{k}\left(t+\frac{j}{h} \right) \right)^2}{\left(\sum_{j \in \Z} \hat{k}\left( t+ \frac{j}{h} \right) \right)^2}, \qquad t\in[0,1/h).
\end{align*}
Since $\hat{k}\geq0$, we have  $A_h(t) \leq 1$ for all $ t \in [0,1/h)$. Consequently, Theorem \ref{Theorem:MainResult} gives $\Vert M_h \Vert_{\ell^2(h\Z ) \to L^2(\R)} \leq 1$. 

To investigate whether $M_h$ is also bounded from below, we consider two examples. First, let $\hat{k}(t)=\e^{-t^2}$. Then condition \eqref{eq:cardinalCondition} is satisfied by Lemma \ref{lem:Gaussianupperandlowerbound}. In this case, 
\begin{align*}
    A_h(t)=\frac{\sum_{j \in \Z} \e^{-2\left(t+\frac{j}{h}\right)^2}}{\left(\sum_{j \in \Z} \e^{-\left(t+\frac{j}{h} \right)^2} \right)^2}, \qquad t\in[0,1/h).
\end{align*}
By Lemma \ref{lem:Gaussianupperandlowerbound}, we have 
\begin{align*}
    \frac{\e^{-1/(2h^2)}}{(1+\sqrt{\pi}h)^2} \leq A_h(t) \leq 1, \qquad  t \in[0,1/h).
\end{align*}
Therefore, Theorem \ref{Theorem:MainResult} yields for all $g \in \ell^2(h \Z)$,
\begin{align*}
    \frac{\e^{-1/(2h^2)}}{(1+\sqrt{\pi}h)^2}    \Vert g \Vert_{\ell^2(h\Z)}^2 \leq \Vert M_h g \Vert_{L^2(\R)}^2 \leq \Vert g \Vert_{\ell^2(h \Z )}^2.
\end{align*}

As a second example, consider the Sobolev-type spectral density $\hat{k}(t)=(1+t^2)^{-\tau}$ for $\tau>1/2$. Then condition \eqref{eq:cardinalCondition} is satisfied by Lemma \ref{lem:Sobolevupperandlowerbound}. In this case, 
\begin{align*}
    A_h(t)= \frac{\sum_{j \in \Z} \left(1+\left(t+\frac{j}{h}\right)^2\right)^{-2\tau}}{\left(\sum_{j \in \Z} \left( 1+\left(t+\frac{j}{h}\right)^2\right)^{-\tau} \right)^2}, \qquad t\in[0,1/h).
\end{align*}
By Lemma \ref{lem:Sobolevupperandlowerbound}, there exists a constant $C_\tau>0$ such that
\begin{align*}
\frac{\left( 1+ \frac{1}{2h^2} \right)^{-2\tau}}{(2+2C_\tau h)^2}    \leq A_h(t) \leq 1, \qquad  t \in [0,1/h).
\end{align*} 
Therefore, Theorem \ref{Theorem:MainResult} yields, for all $g \in \ell^2(h \Z)$,
\begin{align*}
   \frac{\left( 1+ \frac{1}{2h^2} \right)^{-2\tau}}{(2+2C_\tau h)^2}  \Vert g \Vert_{\ell^2(h\Z)}^2 \leq \Vert M_h g \Vert_{L^2(\R)}^2 \leq \Vert g \Vert_{\ell^2(h \Z )}^2.
\end{align*}
\subsection{Conditioning on a Subgroup of the Torus}
Let $G=\mathbb{T}^2$ be the two-dimensional torus, and, for $n \in \N$, we define $H_n:=\{ (a,na) \in \T^2 \, \vert \, a \in \R / \Z \}$. 
We use the normalized Haar measure on $H_n$, identified with the Lebesgue measure on $\mathbb R/\mathbb Z$, that is, for $f \in L^2(H_n)$, $\int_{H_n} f \, \d \mu =\int_0^1 f(a,na)  \d a$. 
For $f\in L^2(\mathbb T^2)$, we use the Fourier convention $f(x,y)=\sum_{(m,l) \in \Z^2} \hat{f}(m,l) \e^{2 \pi \i (mx+ly)}$ where $\hat{f}(m,l)= \int_0^1 \int_0^1 f(x,y) \e^{-2\pi \i (mx+ly)} \, \d x  \d y$. If $g\in L^2(H_n)$, we identify $g$ with the function $g_n: \R/\Z \to \R$ and $g_n(a):=g(a,na)$. Then $g_n(a)=\sum_{l \in \Z} \hat{g}_n(l) \e^{2\pi \i l a}$, with $\hat{g}_n(l)=\int_0^1 g_n(a) \e^{-2\pi \i la } \d a$. 

Now, let $f \in C(\T^2)$ and let $g=f|_{H_n}$. Then $g_n(a)=f(a,na)$. Comparing Fourier coefficients gives 
\begin{align*}
    \hat{g}_n(l)=\sum_{(m,r) \in \Z^2: m+nr=l} \hat{f}(m,r)= \sum_{q \in \Z} \hat{f}(l-nq,q).
\end{align*}
Moreover, $H_n^\perp=\{ (m,r) \in \Z^2 \, \vert \, m+nr=0\}$. Let $X$ be a stationary Gaussian random variable on $\T^2$ with covariance kernel $k$ whose spectral density is $\hat{k}(m,r)=(1+(m^2+r^2))^{-\tau}$ for $\tau>2$, see \cite{Slava2020}. Note that we require $\tau>2$ so that $X$ has continuous paths, see \cite[Theorem 4.1]{steinwart2024paths}.
By Theorem \ref{Theorem:MainCompactResult}, the conditioning operator $M_n:L^2(H_n) \to L^2(\T^2)$ is given on the Fourier side by
\begin{align*}
    \widehat{M_n g_n}(l-nq,q)&= \hat{g}_n(l) \frac{\hat{k}(l-nq,q)}{\sum_{r \in \Z} \hat{k}(l-nr,r)}, \qquad  (l,q) \in \Z^2.
\end{align*}
Equivalently, for $(t,s) \in \Z^2$, writing $l=t+ns$, we have 
\begin{align*}
        \widehat{M_n g_n}(t,s)&= \hat{g}_n(t+ns) \frac{\hat{k}(t,s)}{\sum_{r \in \Z} \hat{k}(t+ns-nr,r)}.
\end{align*}
Thus,
\begin{align*}
    M_n g_n (x,y)= \sum_{(t,s) \in \Z^2} \hat{g}_n(t+ns) \frac{(1+(t^2+s^2))^{-\tau}}{\sum_{r \in \Z} (1+(t+ns-nr)^2+r^2)^{-\tau}} \mathrm{e}^{2 \pi \i (tx+sy)}.
\end{align*}
Again, Theorem \ref{Theorem:MainCompactResult} gives $\Vert M_n \Vert_{L^2(H_n) \to L^2(\T^2)} \leq 1$ and $\Vert M_n \Vert_{\mathcal{N}(H_n)_\theta \to \mathcal{N}(G)_\theta} \leq 1$. 

Finally, let $l_0 \in \Z $ and consider the function $f(a,b)=\e^{2\pi \i l_0 a}$. Then $g(a,na)=\e^{2\pi \i l_0 a }$ and 
\begin{align*}
    \hat{g}_n(l)=\begin{cases}
1, \, \, l=l_0 \\
0, \, \, l\neq l_0.
    \end{cases}
    \end{align*}
Hence,
    \begin{align*}
        M_n g_n(x,y)= \sum_{q \in \Z} \frac{(1+(l_0-nq)^2+q^2)^{-\tau}}{\sum_{r \in \Z} (1+(l_0-nr)^2+r^2)^{-\tau}} \e^{2\pi \i ((l_0-nq)x+qy)}.
    \end{align*}

 \color{black}

\paragraph*{Acknowledgements.}{I would like to thank David Ginsbourger for bringing \cite{lagatta2013continuous} to my attention and for the helpful suggestion to consider interpolation spaces. This work was supported by the Research Council of Finland project 368086.}

\bibliographystyle{plain}
\bibliography{daniel_refs}

\newpage

\begin{appendices}
\section{Auxiliary Results}\label{sec:appendix}\label{sec:6}

Before proving our main results, we establish several necessary technical results that will also be used in the examples.

\begin{lemma}\label{lem:Scalarproduct} 
    Let $k$ be a stationary kernel. Then the norm on $\mathcal{N}(G)$ is given by 
    \begin{align*}
        \Vert f \Vert_{\mathcal{N}(G)}^2 = \int_{\hat{G}} \frac{\vert \hat{f}(\xi)\vert^2}{\hat{k}(\xi)} \d \xi.
    \end{align*}
\end{lemma}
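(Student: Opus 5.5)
The plan is to exhibit an explicit feature map for $k$ and then apply the characterization of $\mathcal N(G)$ as the range of that feature map, with the infimum norm from \cite[Theorem 4.21]{SteinwartSVM}.

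\textbf{Feature map.} Take $H_0:=L^2(\hat G)$ (complex, then pass to real parts) and define
\begin{align*}
\Phi_0(x):=\sqrt{\hat k}\,\overline{\xi(x)} \quad \text{as a function of } \xi.
\end{align*}
Because $\hat k\in L^1(\hat G)$, each $\Phi_0(x)$ lies in $L^2(\hat G)$. Moreover
\begin{align*}
\langle \Phi_0(y),\Phi_0(x)\rangle=\int_{\hat G}\hat k(\xi)\,\xi(x-y)\d\xi=k(x,y).
\end{align*}

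\textbf{Description of $\mathcal N(G)$.} Every $f\in\mathcal N(G)$ then has the form
\begin{align*}
f(x)=\langle w,\Phi_0(x)\rangle=\int_{\hat G} w(\xi)\sqrt{\hat k(\xi)}\,\xi(x)\d\xi .
\end{align*}
Here $w\sqrt{\hat k}\in L^1\cap L^2$: it is in $L^1$ by Cauchy--Schwarz with $\hat k\in L^1$, and in $L^2$ since $\hat k\in L^\infty$. Hence $f\in L^2(G)$, and by Plancherel (Theorem \ref{theorem:Plancherel}) together with Fourier inversion we get $\hat f=w\sqrt{\hat k}$ almost everywhere.

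\textbf{Computing the norm.} The representing vectors $w$ are therefore exactly those with $w=\hat f/\sqrt{\hat k}$ on $\{\hat k>0\}$ and $w$ arbitrary on $\{\hat k=0\}$. This requires $\hat f=0$ almost everywhere on $\{\hat k=0\}$, which matches the stated convention. The infimum of $\|w\|^2$ is attained by setting $w=0$ on $\{\hat k=0\}$, and it equals $\int|\hat f|^2/\hat k$.

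\textbf{Converse inclusion.} If $\int|\hat f|^2/\hat k<\infty$, set $w:=\hat f/\sqrt{\hat k}$. This $w$ is in $L^2(\hat G)$ and reproduces $f$ pointwise, since $\langle w,\Phi_0(\cdot)\rangle$ is continuous and agrees with $f$ almost everywhere; here we take the continuous representative.

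\textbf{Main obstacle: realness.} The RKHS is real while $H_0$ is complex. I will handle this using $\hat k(\xi)=\hat k(\xi^{-1})$, which holds because $k$ is real. This symmetry lets us restrict to $w$ with $w(\xi^{-1})=\overline{w(\xi)}$, i.e. to a real Hilbert subspace. Projecting an arbitrary $w$ onto this subspace does not increase its norm, so the infimum is unchanged.
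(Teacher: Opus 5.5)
Your argument is correct and is essentially the paper's proof. The paper uses the weighted feature space $L^2(\hat G,\hat k(\xi)\d\xi)$ with $\Phi(x)=\xi(x)$, which is your construction with $\sqrt{\hat k}$ absorbed into the measure; there the representer is unique, so no minimization over $\{\hat k=0\}$ is needed. It then applies \cite[Theorem 4.21]{SteinwartSVM} and identifies $\hat f=\varphi\hat k$ by Fourier uniqueness. Your version is in fact a bit more careful: you check $f\in L^2(G)$ before taking $\hat f$, and you handle the real-versus-complex feature space via $\hat k(\xi)=\hat k(\xi^{-1})$, both of which the paper leaves implicit.
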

\begin{proof}
Consider the Hilbert space
\begin{align*}
    L^2(\hat{G},\hat{k}(\xi) \d \xi) :=\left\{ \varphi:\hat{G} \to \C \, \middle\vert \, \, \varphi \textup{ is measurable and } \Vert \varphi\Vert_{L^2(\hat k)}^2:= \int_{\hat{G}} {\vert \varphi (\xi) \vert^2}{\hat{k}(\xi)} \d \xi < \infty \right\}.
\end{align*} 
By the standard $L^2$ construction, this is a Hilbert space. Moreover, we define $\Phi:G \to L^2(\hat{G},\hat{k}(\xi) \d \xi) $ by  $\Phi(x) = \xi(x)$.
Then $k(x,y)= \langle \Phi(x),\Phi(y)\rangle_{L^2(\hat k )}$. 
Applying \cite[Theorem 4.21]{SteinwartSVM} shows that the RKHS norm is given by 
\begin{align*}
    \Vert f \Vert_{\mathcal{N}(G)}^2 = \inf \{ \Vert \varphi \Vert_{L^2(\hat{k})}^2 \, \vert \, {\varphi \in L^2(\hat{G}, \hat k \d \xi ) } \wedge f(x)=\langle \varphi, \Phi(x) \rangle_{L^2(\hat{k})} \, \forall x \in G\}. 
\end{align*}

    We define $T:L^2(\hat{G},\hat{k}(\xi) \d \xi) \to C_b(G)$ by 
    \begin{align*}
       ( T \varphi) (x)=\int_{\hat{G}} \varphi(\xi) \xi(x) \hat{k}(\xi) \d \xi.
    \end{align*}
    This is well-defined by Cauchy--Schwarz. Indeed, since $\hat{k} \in L^1(\hat{G})$, we have for all $\varphi \in L^2(\hat{G},\hat{k}(\xi) \d \xi)$ that
    \begin{align*}
    \left(        \int_{\hat{G}} \vert \varphi (\xi) \vert \hat{k}(\xi) \d \xi \right)^2 
= \left(        \int_{\hat{G}} \left( \vert \varphi (\xi) \vert  \sqrt{\hat{k}(\xi)}\right) \cdot \left( \sqrt{\hat{k}(\xi)}\right)\d \xi \right)^2 
\leq \int_{\hat{G}} \vert \varphi (\xi) \vert^2 \hat{k}(\xi) \d \xi \cdot \int_{\hat{G}} \hat{k}(\xi) \d \xi.
    \end{align*}
Hence, $T\varphi$ is the inverse Fourier transform of an $L^1(\hat{G})$ function. Additionally, by \cite[Theorem 4.21]{SteinwartSVM} $T\varphi \in \mathcal{N}(G)$.

Now, let $f = T \varphi$. By uniqueness of the Fourier transform, see \cite[Theorem 4.33]{Folland}, we have 
\begin{align*}
    \hat{f}(\xi) = \varphi(\xi) \hat{k}(\xi).
\end{align*}
Hence, on $\{\hat{k}>0\}$ we have $\varphi(\xi)=\hat{f}(\xi)/\hat{k}(\xi)$. Moreover, on $\{\hat{k} = 0 \}$ we have $\hat{f}(\xi)=0$. Thus,
\begin{align*}
    \Vert f \Vert_{\mathcal{N}(G)}^2= \Vert \varphi \Vert_{L^2(\hat k )}^2 = \int_{\hat{G}} \vert \varphi(\xi)\vert^2 \hat{k}(\xi) \d \xi = \int_{\{\hat{k}>0\}} \frac{\vert \hat{f} (\xi) \vert^2}{\hat{k}(\xi)} \d \xi
\end{align*}
and the assertion follows.
\end{proof}

\begin{lemma}\label{lem:RKHSProperties}
    Let $k$ be a stationary kernel. Then $ \mathcal{N}(G) \subseteq C_b(G) \cap L^2(G)$, and for every $f \in \mathcal{N}(G)$ one has $\hat{f} \in L^1(\hat{G}) \cap L^2(\hat{G}) $. 
\end{lemma}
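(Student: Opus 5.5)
The plan is to use the feature-map description from the proof of Lemma~\ref{lem:Scalarproduct}. Every $f \in \mathcal{N}(G)$ has the form $f = T\varphi$ with $\varphi \in L^2(\hat{G},\hat{k}(\xi)\d\xi)$. For such $f$ one has $\hat{f} = \varphi \hat{k}$ almost everywhere, and in particular $\hat{f}=0$ almost everywhere on $\{\hat{k}=0\}$. Moreover, $\Vert f\Vert_{\mathcal{N}(G)}^2 = \int_{\hat G} \vert \hat f\vert^2/\hat k \d\xi$ is finite by Lemma~\ref{lem:Scalarproduct}. The three claims will then follow in turn.

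\textbf{Square integrability.} By Plancherel (Theorem~\ref{theorem:Plancherel}), $f \in L^2(G)$ as soon as $\hat{f} \in L^2(\hat{G})$. On $\{\hat k>0\}$ we have
\begin{align*}
\vert \hat f(\xi)\vert^2 = \frac{\vert \hat f(\xi)\vert^2}{\hat k(\xi)}\,\hat k(\xi) \leq \Vert \hat k\Vert_{L^\infty(\hat G)} \frac{\vert \hat f(\xi)\vert^2}{\hat k(\xi)},
\end{align*}
and $\hat f$ vanishes elsewhere. Integrating gives
\begin{align*}
\Vert \hat f\Vert_{L^2(\hat G)}^2 \leq \Vert \hat k\Vert_{L^\infty}\Vert f\Vert_{\mathcal N(G)}^2 < \infty.
\end{align*}
This is the only place where $\hat k \in L^\infty$ is needed. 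There is one subtlety. We must check that the $L^2$-Fourier transform of $f$ agrees with the function $\varphi\hat k$. Here $f$ is the inverse Fourier transform of $\varphi\hat k$, and $\varphi\hat k$ lies in $L^1(\hat G)\cap L^2(\hat G)$. The inverse $L^1$ transform and the Plancherel inverse transform coincide on $L^1\cap L^2$, since both are obtained by approximation and agree on a dense class. So $f\in L^2(G)$ with $\hat f=\varphi\hat k$ in the Plancherel sense.

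\textbf{Integrability of $\hat f$.} Cauchy--Schwarz, exactly as in the proof of Lemma~\ref{lem:Scalarproduct}, gives
\begin{align*}
\Vert \hat f\Vert_{L^1(\hat G)}^2 \leq \Vert \hat k\Vert_{L^1(\hat G)}\,\Vert f\Vert_{\mathcal N(G)}^2.
\end{align*}
Hence $\hat f \in L^1(\hat G)\cap L^2(\hat G)$.

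\textbf{Continuity and boundedness.} Since $f$ is the inverse Fourier transform of the $L^1$ function $\hat f$, it is continuous by dominated convergence, using that characters are continuous. It is bounded by $\sup_x \vert f(x)\vert \le \Vert\hat f\Vert_{L^1}$. Alternatively, $\vert f(x)\vert \leq \Vert f\Vert_{\mathcal N(G)}\sqrt{k(x,x)} = \Vert f\Vert_{\mathcal N(G)}\Vert\hat k\Vert_{L^1}^{1/2}$ by the reproducing property. Either way $f\in C_b(G)$.

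I expect the main obstacle to be the identification of the Plancherel transform with $\varphi\hat k$ in the first step, because Fourier uniqueness (as used in Lemma~\ref{lem:Scalarproduct}) is formulated for measures or $L^1$ functions. I would handle it by approximating $\varphi\hat k$ in both $L^1$ and $L^2$ by $C_c(\hat G)$ functions and passing to the limit on both sides.
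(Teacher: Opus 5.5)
Your proposal is correct and follows essentially the same route as the paper. The $L^\infty$ bound on $\hat k$ gives $\hat f\in L^2(\hat G)$, hence $f\in L^2(G)$ by Plancherel; Cauchy--Schwarz with $\hat k\in L^1(\hat G)$ gives $\hat f\in L^1(\hat G)$; and Fourier inversion gives $f\in C_b(G)$. Your explicit identification of the Plancherel transform of $f=T\varphi$ with $\varphi\hat k$ fills in a step the paper passes over silently when it invokes Plancherel.

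One point to tidy is continuity. Since $G$ need not be first countable, dominated convergence only gives sequential continuity. Cite instead the standard fact that inverse Fourier transforms of $L^1(\hat G)$ functions are continuous. For example, approximate $\hat f$ in $L^1(\hat G)$ by functions in $C_c(\hat G)$: their inverse transforms are continuous by joint continuity of $(x,\xi)\mapsto\xi(x)$ and compactness of the support, and continuity passes to the uniform limit.
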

\begin{proof}
Since \(f\in\mathcal N(G)\), Lemma \ref{lem:Scalarproduct} gives
\[
\|f\|_{\mathcal N(G)}^2
=
\int_{\hat G}
\frac{|\hat f(\xi)|^2}{\hat k(\xi)}\,d\xi
<\infty.
\]
Using \(\hat k\in L^\infty(\hat G)\), we obtain
\[
\|\hat f\|_{L^2(\hat G)}^2
=
\int_{\hat G}|\hat f(\xi)|^2\,d\xi
=
\int_{\hat G}
\frac{|\hat f(\xi)|^2}{\hat k(\xi)}
\hat k(\xi)\,d\xi
\le
\|\hat k\|_{L^\infty(\hat G)}
\|f\|_{\mathcal N(G)}^2.
\]
Hence \(\hat f\in L^2(\hat G)\). By Plancherel,
\[
\|f\|_{L^2(G)}
=
\|\hat f\|_{L^2(\hat G)}
\le
\|\hat k\|_{L^\infty(\hat G)}^{1/2}
\|f\|_{\mathcal N(G)}.
\]
Thus \(\mathcal N(G)\subseteq L^2(G)\) continuously.

Furthermore, since \(\hat k\in L^1(\hat G)\), Cauchy--Schwarz gives
\[
\|\hat f\|_{L^1(\hat G)}
=
\int_{\hat G}
\frac{|\hat f(\xi)|}{\sqrt{\hat k(\xi)}}
\sqrt{\hat k(\xi)}\,d\xi
\le
\|f\|_{\mathcal N(G)}
\|\hat k\|_{L^1(\hat G)}^{1/2}.
\]
Therefore \(\hat f\in L^1(\hat G)\cap L^2(\hat G)\).

Finally, because \(\hat f\in L^1(\hat G)\), Fourier inversion yields
\[
f(x)=\int_{\hat G}\hat f(\xi)\xi(x)\,d\xi,
\]
so \(f\) has a continuous bounded representative. Hence
\[
\mathcal N(G)\subseteq C_b(G)\cap L^2(G).
\]
\end{proof}

\begin{lemma}\label{lem:INL2}
Let $H \subseteq G$ be a closed subgroup. Given a stationary kernel $k$ on $G$, the spectral density $\hat{k}_H:\hat{H} \to \R$ of the restricted kernel is given by 
\begin{align*}
    \hat{k}_H(\xi_H)= \int_{H^\perp} \hat{k}(\xi_H+\xi^\perp) \d \xi^\perp.
\end{align*}
If 
\begin{align}\label{eq:L2Condition}
    \sup_{\xi_H \in \hat{G}/H^\perp} \int_{H^\perp} \hat{k}(\xi_H + \xi^\perp) \d \xi^\perp < \infty,
\end{align}
additionally holds, then $\mathcal{N}(H) \subseteq L^2(H)$. 

If $G$ is compact, \eqref{eq:L2Condition} is satisfied.
\end{lemma}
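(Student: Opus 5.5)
The proof has three parts: identify $\hat k_H$ by decomposing the Fourier integral along cosets of $H^\perp$, transfer the resulting bound to an $L^2$ estimate via \cref{lem:Scalarproduct}, and treat compact $G$ separately.

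\textbf{Step 1: the spectral density of $k_H$.} For $x,y\in H$, start from $k(x,y)=\int_{\hat G}\hat k(\xi)\,\xi(x-y)\d\xi$. Decompose the integral along cosets of $H^\perp$ as in \cref{lem:Fubini}. Lemma~\ref{lem:Fubini} is stated for $\hat f\in C_c$, but the underlying Weil formula $\int_{\hat G}\varphi=\int_{\hat G/H^\perp}\int_{H^\perp}\varphi(\xi_H+\xi^\perp)\d\xi^\perp\d\xi_H$ holds for every $\varphi\in L^1(\hat G)$ by Fubini--Tonelli. Apply it to $\varphi(\xi)=\hat k(\xi)\xi(x-y)$, which is in $L^1$ since $\hat k\in L^1$. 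For $h=x-y\in H$ and $\xi^\perp\in H^\perp$ we have $(\xi_H+\xi^\perp)(h)=\xi(h)$, which depends only on the coset and is exactly the character of $H$ corresponding to $\xi_H$ under $\hat H\cong\hat G/H^\perp$. Hence
\[
k_H(x,y)=\int_{\hat H}\Bigl(\int_{H^\perp}\hat k(\xi_H+\xi^\perp)\d\xi^\perp\Bigr)\xi_H(x-y)\d\xi_H .
\]
By Tonelli the inner function is nonnegative and lies in $L^1(\hat H)$, with norm $\|\hat k\|_{L^1}$. So $k_H$ is given by a Bochner-type representation with this density, and that density is by definition $\hat k_H$. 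Measure normalizations need care here: the measure on $\hat G/H^\perp$ fixed by \cref{lem:Fubini} must coincide with the Plancherel-dual measure of the chosen Haar measure on $H$. I will take this as part of the fixed normalization, choosing the Haar measure on $H$ accordingly.

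\textbf{Step 2: $\mathcal N(H)\subseteq L^2(H)$.} Under \eqref{eq:L2Condition}, $\hat k_H\in L^1(\hat H)\cap L^\infty(\hat H)$, so $k_H$ is a stationary kernel on $H$ in the sense of the definition. Then \cref{lem:RKHSProperties}, applied with $G$ replaced by $H$, gives $\mathcal N(H)\subseteq C_b(H)\cap L^2(H)$, together with the bound $\|f\|_{L^2(H)}\le\|\hat k_H\|_\infty^{1/2}\|f\|_{\mathcal N(H)}$.

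\textbf{Step 3: compact $G$.} If $G$ is compact, then $\hat G$ is discrete by \cref{lemma:compactdiscrete}, and $H^\perp\subseteq\hat G$ is discrete with counting measure. Since the measures on $\hat G$, $H^\perp$ and $\hat G/H^\perp$ are compatible via \cref{lem:Fubini}, the quotient $\hat G/H^\perp$ carries a counting measure as well, up to a positive constant. Therefore
\[
\sup_{\xi_H}\sum_{\xi^\perp}\hat k(\xi_H+\xi^\perp)\le\sum_{\xi\in\hat G}\hat k(\xi)\cdot c=c\,\|\hat k\|_{L^1(\hat G)}<\infty,
\]
where $c$ is that constant, so \eqref{eq:L2Condition} holds.

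The main obstacle is Step 1, specifically the measure-theoretic bookkeeping: one must justify the coset decomposition for $L^1$ integrands rather than for $C_c$ Fourier transforms, and check that the representative-independence of $\xi_H(h)$ for $h\in H$ is compatible with the identification $\hat H\cong\hat G/H^\perp$. Steps 2 and 3 then follow directly from the earlier lemmas.
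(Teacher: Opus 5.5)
Your proposal is correct and follows the paper's proof step for step: you decompose $\int_{\hat G}\hat k(\xi)\xi(x-y)\d\xi$ along cosets of $H^\perp$ using $\xi^\perp|_H\equiv 1$, apply Lemma~\ref{lem:RKHSProperties} on $H$ once $\hat k_H\in L^1(\hat H)\cap L^\infty(\hat H)$, and in the compact case bound each coset sum by $\sum_{\xi\in\hat G}\hat k(\xi)$. Your extra care about the $L^1$ coset decomposition and the measure normalizations is an improvement, since the paper applies Lemma~\ref{lem:Fubini} to $\hat k$ even though that lemma is stated only for $C_c$ Fourier transforms (the $L^1$ case is the standard extension of Weil's formula by density and monotone convergence, not literally Fubini--Tonelli), and in Step~3 only finiteness matters, so where your constant $c$ sits is immaterial.
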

\begin{proof}
    Define $k_H:H \times H \to \R$ by $k_H(t,s)=k(t,s)$ for all $t,s \in H$. This implies 
    \begin{align*}
        k_H(t,s)=\int_{\hat{G}} \hat{k}(\xi)\xi(t-s) \d \xi.
    \end{align*}
    By Lemma \ref{lem:Fubini} we have 
    \begin{align*}
        \int_{\hat{G}} \hat{k}(\xi)\xi(t-s) \d \xi 
        &= \int_{\hat{G}/H^\perp} \int_{H^\perp} \hat{k}(\xi_H+\xi^\perp) (\xi_H+\xi^\perp)(t-s) \d \xi^\perp \d \xi_H \\
        &= \int_{\hat{G}/H^\perp} \int_{H^\perp} \hat{k}(\xi_H+\xi^\perp) \xi_H(t-s) \d \xi^\perp \d \xi_H \\
        &= \int_{\hat{G}/H^\perp} \left[ \int_{H^\perp} \hat{k}(\xi_H+\xi^\perp) \d \xi^\perp\right] \xi_H(t-s) \d \xi_H,
    \end{align*}
    note that we used $\xi^\perp(t-s) = 1 $ for all $t,s \in H$ and $\xi^\perp \in H^\perp$. Thus the spectral density of $k_H$ is given by 
    \begin{align*}
        \hat{k}_H(\xi_H)=\int_{H^\perp} \hat{k}(\xi_H+\xi^\perp) \d \xi^\perp, \quad \forall \xi_H \in \hat{G}/H^\perp \cong \hat{H}.
    \end{align*}
    By stationarity, $\hat{k} \in L^1(\hat{G}) \cap L^\infty(\hat{G})$, and hence $\hat{k}_H \in L^1(\hat{H})$. Assumption \eqref{eq:L2Condition} additionally gives $\hat{k}_H \in L^\infty(\hat{H})$. Lemma \ref{lem:RKHSProperties} therefore implies that $\mathcal{N}(H) \subseteq L^2(H)$.  

    If $G$ is compact we then have 
    \begin{align*}
        \hat{k}_H(\xi_H)= \sum_{\xi^\perp \in H^\perp} \hat{k}(\xi_H+\xi^\perp) \leq \sum_{\xi \in \hat{G}} \hat{k}(\xi) < \infty.
    \end{align*}
    In the last step, we used $\hat{k} \in L^1(\hat{G})$.
\end{proof}

\begin{lemma}\label{lem:Gaussianupperandlowerbound}
Let $h>0$ and $A_h:[0,1/h) \to \R$ be given by  
\begin{align*}
    A_h(t)=\frac{\sum_{j \in \Z} \e^{-2\left(t+\frac{j}{h}\right)^2}}{\left(\sum_{j \in \Z} \e^{-\left(t+\frac{j}{h} \right)^2} \right)^2}
\end{align*}
We then have that 
\begin{align*}
    \frac{\e^{-1/(2h^2)}}{(1+\sqrt{\pi}h)^2} \leq A_h(t) \leq 1, \quad \forall t \in[0,1/h).
\end{align*}
Additionally, we have that
\begin{align*}
    \sup_{t \in [0,1/h)} \sum_{j \in \Z} \e^{-\left(t+\frac{j}{h} \right)^2} < \infty.
\end{align*}
\end{lemma}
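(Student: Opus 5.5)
Write $a_j:=\e^{-(t+j/h)^2}\ge 0$, so that
\begin{align*}
A_h(t)=\frac{\sum_j a_j^2}{\left(\sum_j a_j\right)^2}.
\end{align*}
The plan is to treat the upper bound, the uniform bound on $S(t):=\sum_j a_j$, and the lower bound separately.

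\textbf{Upper bound.} This is immediate from nonnegativity. Expanding the square gives
\begin{align*}
\Big(\sum_j a_j\Big)^2=\sum_j a_j^2+\sum_{j\neq l}a_ja_l\ge \sum_j a_j^2,
\end{align*}
so $A_h(t)\le 1$. The only thing to note is that the denominator is positive, since every $a_j>0$.

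\textbf{Bound on $S(t)$.} I will show $S(t)\le 1+\sqrt{\pi}\,h$ for all $t\in[0,1/h)$, which gives the finiteness claim and is reused below. The function $x\mapsto \e^{-x^2}$ is increasing on $(-\infty,0]$ and decreasing on $[0,\infty)$. For a unimodal nonnegative function sampled on a grid of spacing $1/h$, the sum is at most the maximum value plus $h$ times the integral. To see this, pair every sample except the one closest to the peak with an interval of length $1/h$ lying on the far side from the peak; on that interval the function dominates the sample value. This yields
\begin{align*}
S(t)\le 1+h\int_{\R}\e^{-x^2}\d x=1+\sqrt{\pi}\,h .
\end{align*}

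\textbf{Lower bound.} Bound the numerator from below by a single term and use the estimate on $S(t)$ for the denominator. Since $t\in[0,1/h)$, either $j=0$ or $j=-1$ gives $|t+j/h|\le 1/(2h)$. For that index $j^*$,
\begin{align*}
\sum_j a_j^2\ge a_{j^*}^2=\e^{-2(t+j^*/h)^2}\ge \e^{-2/(4h^2)}=\e^{-1/(2h^2)}.
\end{align*}
Dividing by $S(t)^2\le (1+\sqrt{\pi}h)^2$ gives the claimed lower bound.

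The only step needing care is the integral comparison for the shifted grid $t+j/h$, specifically the bookkeeping near the maximum. Two samples may straddle $0$, and I would handle this explicitly by splitting the grid at $0$. On the nonnegative side, the samples at points $x_0<x_1<\dots$ satisfy $a(x_k)\le h\int_{x_{k-1}}^{x_k}\e^{-x^2}\d x$ for $k\ge 1$. The negative side is handled symmetrically. Only one sample then remains uncovered, the one with the largest value, and it is bounded by $1$.
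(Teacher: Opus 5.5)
Your proof is correct. It has the same overall structure as the paper's: expand the square to get $A_h\le 1$, keep a single term of the numerator, and use an integral comparison to get $S(t)\le 1+\sqrt{\pi}h$, which also gives the supremum claim. The paper does not say which numerator term it keeps; your choice $j^*\in\{0,-1\}$ makes that explicit.

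The two arguments differ in the denominator. The paper writes $\sum_{j}\e^{-(t+j/h)^2}\le 1+2\sum_{j\ge1}\e^{-j^2/h^2}$. That is, it bounds the shifted sum by its value at $t=0$ and then compares the one-sided sum with $\int_0^\infty$. This first inequality is true: by Poisson summation the $1/h$-periodization of $\e^{-x^2}$ has positive Fourier coefficients, so it is maximal at $t=0$. However, the paper gives no argument for it, and a termwise comparison only yields $2+2\sum_{j\ge1}\e^{-j^2/h^2}$. Your unimodal comparison works directly for every shift and reaches the same constant, so it is the more self-contained route.

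Two small repairs are needed in your write-up.
\begin{enumerate}
\item The comparison interval for a sample must lie on the side facing the peak, not on the far side. Your own formula with $[x_{k-1},x_k]$ already does this.
\item After you treat the two half-lines separately, two samples $y_0<0\le x_0$ remain uncovered, not one. The smaller of them is absorbed by the straddling interval $[y_0,x_0]$, on which $\e^{-x^2}\ge\min\{\e^{-x_0^2},\e^{-y_0^2}\}$ by unimodality. Without this step you would only get $2+\sqrt{\pi}h$.
\end{enumerate}
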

\begin{proof}
    The upper bound follows directly by $\sum_{j \in \Z} a_j^2 \leq \left( \sum_{j \in \Z} a_j\right)^2$ for any positive sequence $(a_j) \in \ell^1(\Z)$. 

    For the lower bound we first estimate the numerator by 
    \begin{align*}
        \sum_{j \in \Z} \e^{-2\left(t+\frac{j}{h}\right)^2} \geq \e^{-1/(2h^2)}. 
    \end{align*}
    The denominator is estimated by 
    \begin{align*}
        \sum_{j \in \Z} \e^{-\left(t+\frac{j}{h} \right)^2} \leq 1+2 \sum_{j=1}^\infty \e^{-j^2/h^2} \leq 1 + 2 \int_0^\infty \e^{-x^2/h^2} \d x = 1+ \sqrt{\pi} h.
    \end{align*}
Thus the assertion follows.
\end{proof}

\begin{lemma}\label{lem:Sobolevupperandlowerbound}
Let $h>0$, $\tau>1/2$, and $A_h:[0,1/h) \to \R$ be given by
\begin{align*}
    A_h(t)= \frac{\sum_{j \in \Z} \left(1+\left(t+\frac{j}{h}\right)^2\right)^{-2\tau}}{\left(\sum_{j \in \Z} \left( 1+\left(t+\frac{j}{h}\right)^2\right)^{-\tau} \right)^2}
\end{align*}    
Then there exists a constant $C_\tau>0$, independent of $h$ but possibly depending on $\tau$, such that
\begin{align*}
\frac{\left( 1+ \frac{1}{2h^2} \right)^{-2\tau}}{(2+2C_\tau h)^2}    \leq A_h(t) \leq 1, \quad \forall t \in [0,1/h).
\end{align*}
Additionally, we have that
\begin{align*}
    \sup_{t \in [0,1/h)} \sum_{j \in \Z} \left( 1+\left(t+\frac{j}{h}\right)^2\right)^{-\tau} < \infty.
\end{align*}

\end{lemma}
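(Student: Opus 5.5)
The upper bound $A_h(t)\le 1$ follows exactly as in Lemma \ref{lem:Gaussianupperandlowerbound}. Put $a_j:=\bigl(1+(t+\tfrac{j}{h})^2\bigr)^{-\tau}>0$. Then $(a_j)\in\ell^1(\Z)$ because $\tau>1/2$, and $\sum_j a_j^2\le(\sum_j a_j)^2$. For the lower bound I will treat the numerator and the denominator separately, as in the Gaussian case. The only new ingredient is that polynomial decay requires an integral comparison that is uniform in $t$. That comparison also gives the finiteness of the supremum.

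\emph{Numerator.} Fix $t\in[0,1/h)$.
\begin{itemize}
\item If $t\le \frac{1}{2h}$, take $j=0$; then $|t+j/h|\le \frac1{2h}$.
\item If $t>\frac1{2h}$, take $j=-1$; then $|t-1/h|<\frac{1}{2h}$.
\end{itemize}
In both cases the numerator is at least the single term
\begin{align*}
\Bigl(1+\tfrac{1}{4h^2}\Bigr)^{-2\tau}\ \ge\ \Bigl(1+\tfrac{1}{2h^2}\Bigr)^{-2\tau}.
\end{align*}

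\emph{Denominator.} The points $x_j:=t+j/h$ are spaced $1/h$ apart, and $t\in[0,1/h)$. Split $\Z$ into three parts.
\begin{itemize}
\item The indices $j\in\{0,-1\}$ contribute at most $2$, since each term is at most $1$.
\item For $j\ge1$ we have $x_j\ge j/h$.
\item For $j\le -2$ we have $|x_j|\ge (|j|-1)/h$, and $|j|-1\ge1$.
\end{itemize}
Hence both tails are bounded by $\sum_{m\ge1}(1+m^2/h^2)^{-\tau}$. Since $x\mapsto(1+x^2/h^2)^{-\tau}$ is decreasing on $[0,\infty)$, each tail satisfies
\begin{align*}
\sum_{m\ge1}\Bigl(1+\tfrac{m^2}{h^2}\Bigr)^{-\tau}\le \int_0^\infty\Bigl(1+\tfrac{x^2}{h^2}\Bigr)^{-\tau}\d x = h\int_0^\infty (1+u^2)^{-\tau}\d u =: C_\tau h,
\end{align*}
where we substituted $x=hu$. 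The constant satisfies $C_\tau<\infty$ precisely because $\tau>1/2$, and it does not depend on $h$. Therefore the denominator's inner sum is at most $2+2C_\tau h$ for every $t$.

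Combining the two estimates gives
\begin{align*}
A_h(t)\ge \frac{\left(1+\frac{1}{2h^2}\right)^{-2\tau}}{(2+2C_\tau h)^2}
\end{align*}
for all $t\in[0,1/h)$.

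The same bound $\sum_j(1+(t+j/h)^2)^{-\tau}\le 2+2C_\tau h$, being uniform in $t$, yields the final assertion that the supremum over $t\in[0,1/h)$ is finite.

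The only step needing care is the index bookkeeping in the denominator. The two terms nearest the origin must be separated off so that every remaining term is dominated by a monotone integrand evaluated at a point at least $m/h$ from $0$. Otherwise the comparison with the integral would fail near $x=0$.
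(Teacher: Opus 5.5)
Your proof is correct and takes the same approach as the paper: $\sum_j a_j^2\le(\sum_j a_j)^2$ gives the upper bound, a single term near the origin bounds the numerator, and an integral comparison gives $2+2C_\tau h$ for the denominator with $C_\tau=\int_0^\infty(1+u^2)^{-\tau}\d u$. You also supply details the paper leaves implicit, namely the choice $j\in\{0,-1\}$ for the numerator and the reindexing $m=|j|-1$ that controls the negative tail.
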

\begin{proof}
       The upper bound follows directly by $\sum_{j \in \Z} a_j^2 \leq \left( \sum_{j \in \Z} a_j\right)^2$ for any positive sequence $(a_j) \in \ell^1(\Z)$. 
           For the lower bound we first estimate the numerator by 
    \begin{align*}
      \sum_{j \in \Z} \left(1+\left(t+\frac{j}{h}\right)^2\right)^{-2\tau} \geq \left( 1+ \frac{1}{2h^2} \right)^{-2\tau}. 
    \end{align*} 
        The denominator is estimated by 
    \begin{align*}
     \sum_{j \in \Z} \left( 1+\left(t+\frac{j}{h}\right)^2\right)^{-\tau} \leq 2 + 2h \int_0^\infty (1+x^2)^{-\tau} \d x \leq 2(1+C_\tau h)
    \end{align*}
    for some constant $C_\tau >0$. Thus the assertion follows.
\end{proof}

\begin{lemma}\label{lem:minimizationproblem}
Let $G$ be a locally compact abelian group and let $H\subseteq G$ be a closed subgroup. Let $k:G \times G \to \R$ be a stationary kernel with spectral density $\hat{k}:\hat{G} \to \R$.
Define
\begin{align*}
    \hat k_H(\xi_H)
:=
\int_{H^\perp}
\hat k(\xi_H+\xi^\perp) \d\xi^\perp,
\qquad
\xi_H\in \hat G/H^\perp\simeq \widehat H.
\end{align*}
Let $g\in\mathcal N(H)$. Then the unique solution of the minimal-norm extension problem
\begin{align*}
    \min
\left\{
\Vert f\Vert_{\mathcal N(G)}^2
=
\int_{\hat G}
\frac{|\hat f(\xi)|^2}{\hat k(\xi)} \d\xi
 \, \middle| \, 
f\in\mathcal N(G),\ f|_H=g \right\}
\end{align*}
is characterized on the Fourier side by
\begin{align*}
    \hat f(\xi_H+\xi^\perp)
=
\hat g(\xi_H)
\frac{\hat k(\xi_H+\xi^\perp)}
{\hat k_H(\xi_H)}
\end{align*}
for almost all $\xi_H\in \hat G/H^\perp$ and $\xi^\perp\in H^\perp$. On the set where $\hat k_H=0$, the quotient is interpreted as zero.
\end{lemma}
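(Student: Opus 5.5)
The plan is to treat the problem fibrewise on the Fourier side, using the decomposition of Lemma \ref{lem:Fubini}. First I translate the constraint $f|_H=g$ into a Fourier condition. For $f\in\mathcal N(G)$, Lemma \ref{lem:RKHSProperties} gives $\hat f\in L^1(\hat G)$ and pointwise inversion. For $x\in H$, every $\xi^\perp\in H^\perp$ satisfies $\xi^\perp(x)=1$, exactly as in the proof of Lemma \ref{lem:INL2}. Hence
\begin{align*}
    f(x)=\int_{\hat G/H^\perp}\left[\int_{H^\perp}\hat f(\xi_H+\xi^\perp)\d\xi^\perp\right]\xi_H(x)\d\xi_H,\qquad x\in H .
\end{align*}
The inner integral defines an $L^1(\hat H)$ function $P\hat f$. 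Uniqueness of the Fourier transform on $H$ then gives the equivalence
\begin{align*}
    f|_H=g \iff \int_{H^\perp}\hat f(\xi_H+\xi^\perp)\d\xi^\perp=\hat g(\xi_H)\quad\text{for a.e. }\xi_H .
\end{align*}
Here I use that $g\in\mathcal N(H)$ is continuous with $\hat g\in L^1(\hat H)$, by Lemma \ref{lem:RKHSProperties} applied to $k_H$.

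Next I split the objective by Lemma \ref{lem:Scalarproduct} and Fubini:
\begin{align*}
    \Vert f\Vert_{\mathcal N(G)}^2=\int_{\hat G/H^\perp}\int_{H^\perp}\frac{|\hat f(\xi_H+\xi^\perp)|^2}{\hat k(\xi_H+\xi^\perp)}\d\xi^\perp\d\xi_H .
\end{align*}
The problem thus decouples into independent fibre problems. On each fibre we minimise $\int |\phi|^2/\hat k$ over $\phi$ subject to the linear constraint $\int\phi=\hat g(\xi_H)$.

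The fibre problems are solved by Cauchy--Schwarz. For fixed $\xi_H$ with $\hat k_H(\xi_H)>0$,
\begin{align*}
    |\hat g(\xi_H)|^2=\left|\int_{H^\perp}\frac{\phi}{\sqrt{\hat k}}\sqrt{\hat k}\right|^2\le \hat k_H(\xi_H)\int_{H^\perp}\frac{|\phi|^2}{\hat k}.
\end{align*}
Equality holds if and only if $\phi/\sqrt{\hat k}$ is proportional to $\sqrt{\hat k}$, that is, $\phi=\hat g(\xi_H)\hat k/\hat k_H(\xi_H)$. This gives the fibre minimum $|\hat g(\xi_H)|^2/\hat k_H(\xi_H)$. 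Where $\hat k_H(\xi_H)=0$, we have $\hat k=0$ a.e. on the fibre, so finiteness of the norm forces $\phi=0$. In that case $\hat g(\xi_H)=0$ too, because $\Vert g\Vert_{\mathcal N(H)}<\infty$ via Lemma \ref{lem:Scalarproduct} for $k_H$; this matches the convention that the quotient is zero.

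It remains to check that the candidate $\hat f$ is admissible. Integrating the fibre minima gives
\begin{align*}
    \int_{\hat G}\frac{|\hat f|^2}{\hat k}=\int_{\hat H}\frac{|\hat g|^2}{\hat k_H}=\Vert g\Vert_{\mathcal N(H)}^2<\infty .
\end{align*}
Also $\hat f\in L^1(\hat G)$, since $\int_{\hat G}|\hat f|=\int_{\hat H}|\hat g|<\infty$. So $f$, defined by inverse Fourier transform, lies in $\mathcal N(G)$; this follows from the construction in the proof of Lemma \ref{lem:Scalarproduct} with $\varphi=\hat f/\hat k$. The fibre identity then gives $f|_H=g$. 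Measurability of $\hat f$ in $(\xi_H,\xi^\perp)$ is clear from the formula. Any competitor $\tilde f$ satisfies the fibre inequality almost everywhere, so $\Vert\tilde f\Vert\ge\Vert f\Vert$ by integration. Uniqueness follows from strict convexity of the norm, or directly from the equality case in Cauchy--Schwarz holding for a.e. fibre.

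The main obstacle is the first step, justifying that the restriction to $H$ corresponds exactly to the fibre integral. This requires the Fubini decomposition of Lemma \ref{lem:Fubini} for $L^1$ Fourier transforms, which was stated only for $\hat f\in C_c$ or for the $L^2$ setting, together with pointwise identification on $H$. I would handle it by extending Lemma \ref{lem:Fubini} to $\hat f\in L^1(\hat G)$, using Weil's formula for integrable functions. Continuity of both sides on $H$ then upgrades almost-everywhere equality to equality everywhere, which is needed because $H$ may be Haar-null in $G$.
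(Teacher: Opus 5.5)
Your proposal is correct and follows essentially the same route as the paper. You rewrite the restriction constraint as the fibre condition $\int_{H^\perp}\hat f(\xi_H+\xi^\perp)\d\xi^\perp=\hat g(\xi_H)$, decouple the norm along the cosets of $H^\perp$, and solve each fibre problem by the equality case of Cauchy--Schwarz, which is the paper's Riesz-representer argument in $L^2(H^\perp,\hat k(\xi_H+\cdot)^{-1}\d\xi^\perp)$ in different words. Your extra checks fill in steps the paper leaves implicit: that the assembled candidate has $\hat f\in L^1(\hat G)$, lies in $\mathcal N(G)$ and restricts to $g$; the integrated lower bound for uniqueness; and Weil's formula for $L^1$ functions in place of Lemma \ref{lem:Fubini}.
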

\begin{proof}
First observe that the restriction condition can be expressed on the Fourier side. If $f\in\mathcal N(G)$, then, for $x\in H$, the Fourier inversion formula from Theorem \ref{theorem:Fourierinversion} and the Haar-measure decomposition from Lemma \ref{lem:Fubini} give
\begin{align*}
    f(x) 
    = \int_{\hat{G}}    \hat{f}(\xi) \xi(x) \d \xi 
 = \int_{\hat{G}/H^\perp} \left[\int_{H^\perp} \hat{f}(\xi_H + \xi^\perp) (\xi_H+\xi^\perp)(x) \d \xi^\perp \right] \d \xi_H.
\end{align*}
Since $\xi^\perp(x)=1$ for all $x\in H$, this becomes
\begin{align*}
f(x) = \int_{\hat{G}/H^\perp} \left[ \int_{H^\perp} \hat{f}(\xi_H+\xi^\perp) \d \xi^\perp \right] \xi_H(x) \d \xi_H.
\end{align*}
Identifying $\hat H$ with $\hat G/H^\perp$, as in \cite[Theorem 4.39]{Folland}, and using uniqueness of the Fourier transform, we obtain
\begin{align*}
    \hat g(\xi_H)
=
\int_{H^\perp}
\hat f(\xi_H+\xi^\perp) \d\xi^\perp
\end{align*}
for almost all $\xi_H \in \hat{G}/H^\perp$.

By Lemma \ref{lem:Fubini}, the norm in $\mathcal N(G)$ decomposes as 
\begin{align*}
    \Vert f\Vert_{\mathcal N(G)}^2
=
\int_{\hat G}
\frac{|\widehat f(\xi)|^2}{\widehat k(\xi)} \d\xi \
=
\int_{\hat G/H^\perp}
\int_{H^\perp}
\frac{
|\widehat f(\xi_H+\xi^\perp)|^2
}{
\widehat k(\xi_H+\xi^\perp)
}
\d\xi^\perp \d\xi_H .
\end{align*}
Thus the minimization problem separates into independent fiberwise minimization problems. 

Fix $\xi_H\in\hat G/H^\perp$ such that $0<\hat k_H(\xi_H)<\infty$, and set 
\begin{align*}
    K_{\xi_H}(\xi^\perp)
:=
\hat k(\xi_H+\xi^\perp).
\end{align*}
Consider the Hilbert space
\begin{align*}
    H_{\xi_H}
:=
L^2\left(
H^\perp,
K_{\xi_H}(\xi^\perp)^{-1}\d\xi^\perp
\right).
\end{align*}
Consider the functional $L:H_{\xi_H} \to \C$ given by $L(F) := \int_{H^\perp} F(\xi^\perp) \d \xi^\perp$.
The constraint on the fiber is given by 
\begin{align*}
    L(F)=\hat g(\xi_H).
\end{align*}
The functional $L:H_{\xi_H} \to \C$ is bounded, since 
\begin{align*}
    \vert L(F)|
\leq
\left(
\int_{H^\perp}
\frac{|F(\xi^\perp)|^2}{K_{\xi_H}(\xi^\perp)}
 \d\xi^\perp
\right)^{1/2}
\left(
\int_{H^\perp}
K_{\xi_H}(\xi^\perp) \d\xi^\perp
\right)^{1/2}.
\end{align*}
The Riesz representer $\psi_{\xi_H}$ of $L$ is 
\begin{align*}
    \psi_{\xi_H}(\xi^\perp)=K_{\xi_H}(\xi^\perp),
\end{align*} 
and 
\begin{align*}
    \Vert \psi_{\xi_H}\Vert_{H{\xi_H}}^2
=
\int_{H^\perp}
K_{\xi_H}(\xi^\perp)\d\xi^\perp
=
\hat k_H(\xi_H).
\end{align*}
The minimal-norm element satisfying $L(F)=\widehat g(\xi_H)$ is therefore 
\begin{align*}
  F(\xi^\perp)
=
\hat g(\xi_H)
\frac{K_{\xi_H}(\xi^\perp)}
{\hat k_H(\xi_H)}
=
\hat g(\xi_H)
\frac{\hat k(\xi_H+\xi^\perp)}
{\hat k_H(\xi_H)} .
\end{align*}
This gives the asserted formula.

If $\hat k_H(\xi_H)=0$, then $K_{\xi_H}=0$ almost everywhere on $H^\perp$. Since $g\in\mathcal N(H)$, its Fourier transform satisfies $\hat g=0$ almost everywhere on $\{\hat k_H=0\}$. On this set we define the fiberwise minimizer to be zero. Hence the formula holds with the stated convention.
\end{proof}
\end{appendices}
\begin{appendices}
\section{Proofs of the Main Results}\label{sec:proofs}\label{sec:5}
We are now in a position to prove our main results.

\begin{proof}[Proof of Theorem \ref{Theorem:MainResult}]
    By Lemma \ref{lem:minimizationproblem}, the minimal-norm extension $M_Wg$ of $g\in\mathcal N(H)$ is characterized on the Fourier side by 
    \begin{align*}
        \widehat{M_W g}(\xi_H+\xi^\perp)
=
\hat g(\xi_H)
\frac{\hat k(\xi_H+\xi^\perp)}
{\hat k_H(\xi_H)}.
    \end{align*}
Using Plancherel's theorem \ref{theorem:Plancherel} and the Haar-measure decomposition from Lemma \ref{lem:Fubini} we obtain 
\begin{align*}
    \Vert M_Wg\Vert_{L^2(G)}^2
=\int_{\hat G} |\widehat{M_Wg}(\xi)|^2 \d\xi 
&=\int_{\hat G/H^\perp} \int_{H^\perp}|\widehat g(\xi_H)|^2\frac{|\widehat k(\xi_H+\xi^\perp)|^2
}{|\widehat k_H(\xi_H)|^2}\d\xi^\perp \d\xi_H \\
&=\int_{\hat G/H^\perp} A(\xi_H)|\widehat g(\xi_H)|^2 \d\xi_H .
\end{align*}
Consequently, if $A\in L^\infty(\widehat G/H^\perp)$, then 
\begin{align*}
    \Vert M_Wg \Vert_{L^2(G)}^2 \leq \Vert A\Vert_{L^\infty(\hat G/H^\perp)}
\Vert g \Vert_{L^2(H)}^2 .
\end{align*} 
Thus the same Fourier formula defines a bounded operator $M:L^2(H)\to L^2(G)$, and 
\begin{align*}
    \Vert M \Vert_{L^2(H)\to L^2(G)}^2 \leq \Vert A\Vert_{L^\infty(\hat G/H^\perp)}.
\end{align*}
It remains to prove equality in the operator norm. Set
\begin{align*}
    S:=
\esssup_{\xi_H\in\hat G/H^\perp}
A(\xi_H).
\end{align*}
For $\varepsilon>0$, the set 
\begin{align*}
    E_\varepsilon
:=
\{\xi_H\in\widehat G/H^\perp\mid A(\xi_H)>S-\varepsilon\}
\end{align*}
has positive Haar measure. By regularity of Haar measure, we may choose a measurable subset $F_\varepsilon \subseteq E_\varepsilon$ with 
$0<\mu(F_\varepsilon)<\infty$. 
Define $\hat{g}_{\varepsilon}:\hat{G}/H^\perp \to \R$ by  
\begin{align*}
    \hat g_\varepsilon(\xi_H)
:=
\frac{\chi_{F_\varepsilon}(\xi_H)}
{\sqrt{\mu(F_\varepsilon)}} .
\end{align*}
Then $\Vert g_\varepsilon \Vert_{L^2(H)}=1$, and 
\begin{align*}
    \Vert Mg_\varepsilon \Vert_{L^2(G)}^2
=
\int_{\hat G/H^\perp}
A(\xi_H)|\widehat g_\varepsilon(\xi_H)|^2 \d\xi_H \
=
\frac{1}{\mu(F_\varepsilon)}
\int_{F_\varepsilon}
A(\xi_H) \d\xi_H
>
S-\varepsilon .
\end{align*}
Letting $\varepsilon\downarrow 0$, we obtain
\begin{align*}
    \Vert M\Vert_{L^2(H)\to L^2(G)}^2
=
S.
\end{align*}
If $\mathcal N(H)$ is dense in $L^2(H)$, then any bounded extension of $M_W$ to $L^2(H)$ is unique. 

The formula for the lower bound is proved analogously. Let 
\begin{align*}
  I:=    \essinf_{\xi_H \in \hat{G}/H^\perp} A(\xi_H).
\end{align*}
Since $A\geq I$ almost everywhere, every $g\in L^2(H)$ with
$\Vert g \Vert_{L^2(H)}=1$ satisfies 
\begin{align*}
    \Vert Mg \Vert_{L^2(G)}^2
=
\int_{\hat G/H^\perp}
A(\xi_H)|\hat g(\xi_H)|^2 \d\xi_H
\geq I.
\end{align*}
Conversely, for $\varepsilon>0$, choose a measurable set
$F_\varepsilon\subseteq\{\xi_H \in \hat{G}/H^\perp \, \vert \, A(\xi_H)<I+\varepsilon\}$ with
$0<\mu(F_\varepsilon)<\infty$, and define $\hat{g}_\varepsilon: \hat{G}/H^\perp \to \R$ by
\begin{align*}
    \hat g_\varepsilon
:=
\frac{\chi_{F_\varepsilon}}{\sqrt{\mu(F_\varepsilon)}}.
\end{align*}
Then $\Vert g_\varepsilon \Vert_{L^2(H)}=1$ and 
\begin{align*}
    \Vert Mg_\varepsilon\Vert_{L^2(G)}^2
<
I+\varepsilon .
\end{align*}
Letting $\varepsilon\downarrow 0$ yields 
\begin{align*}
    \inf_{\Vert g \Vert_{L^2(H)}=1}
\Vert Mg\Vert_{L^2(G)}^2
=
I.
\end{align*}
Finally, the minimal-norm extension operator satisfies 
\begin{align*}
    \Vert M_Wg \Vert_{\mathcal N(G)}
= 
\Vert g\Vert_{\mathcal N(H)},
\qquad g\in\mathcal N(H), 
\end{align*}
see \cite[Theorem 10.46]{wendland2004scattered}. Hence $\Vert M_W\Vert_{\mathcal{N}(H) \to \mathcal{N}(G) } =1$. 
Applying Corollary \ref{cor:Interpolation} to $M$ and $M_W$ yields the assertion.
\end{proof}

\begin{proof}[Proof of Theorem \ref{Theorem:MainCompactResult}]
    Since $G$ is compact, the dual group $\hat G$ is discrete, see Lemma \ref{lemma:compactdiscrete}. Hence the integrals over $H^\perp$ in Theorem \ref{Theorem:MainResult} become sums. The restricted spectral density of $Y=X|_H$ is therefore 
    \begin{align*}
        \hat{k}_{Y}(\xi_H)= \sum_{\xi^\perp \in H^\perp} \hat{k}_X(\xi_H+\xi^\perp), \qquad \xi_H \in \hat{G}/H^\perp.
    \end{align*} 
    Applying Theorem \ref{Theorem:MainResult} gives the Fourier-side formula 
    \begin{align*}
        \widehat{M_Wg} (\xi) = \hat{g}(\xi_H) \frac{\hat{k}_X(\xi)}{\hat{k}_Y(\xi_H)}, \qquad\xi_H:=\xi+H^\perp.
    \end{align*}
It remains to check the contraction bound. For fixed $\xi_H \in \hat{G}/H^\perp$, set
\begin{align*}
    a_{\xi^\perp} := \hat{k}_X(\xi_H+\xi^\perp), \qquad \xi^\perp \in H^\perp.
\end{align*}
Since $\hat{k}_X \geq 0$, the sequence $(a_{\xi^\perp})_{\xi^\perp \in H^\perp}$ is non-negative and summable. Therefore 
\begin{align*}
    \sum_{\xi^\perp \in H^\perp} a_{\xi^\perp}^2 \leq \left( \sum_{\xi^\perp \in H^\perp} a_{\xi^\perp} \right)^2. 
\end{align*}
Consequently, 
\begin{align*}
    \frac{
\sum_{\xi^\perp\in H^\perp}
|\hat k_X(\xi_H+\xi^\perp)|^2
}{
\left|
\sum_{\xi^\perp\in H^\perp}
\hat k_X(\xi_H+\xi^\perp)
\right|^2
}
\leq 1
\end{align*}
whenever the denominator is non-zero, while the quotient is interpreted as zero when the denominator vanishes. Thus the function $A$ from Theorem \ref{Theorem:MainResult} satisfies $A\leq 1$, and the asserted $L^2(H)\to L^2(G)$ bound follows. Applying Corollary \ref{cor:Interpolation} also implies the bounds for $M_\theta:\mathcal{N}(H)_\theta \to \mathcal{N}(G)_\theta$. 

If $W_Y$ is dense in $L^2(H)$, then the bounded extension of $M_W$ to $L^2(H)$ is unique. The identity 
\begin{align*}
    MY=\E(X \vert Y)
\end{align*}
then follows from the general relation between the minimal-norm extension operator and Gaussian conditional expectations, see \cite[Example 42]{WinkleAnalysis}. 
\end{proof}

\newpage

\end{appendices}

\end{document}